\documentclass[12pt]{article}

\usepackage[margin=1in]{geometry}
\usepackage{amsmath,amssymb,amsthm,mathtools}
\usepackage{enumitem}
\usepackage{graphicx}
\usepackage{booktabs}
\usepackage{tabularx}
\usepackage{natbib}
\usepackage[colorlinks=true,linkcolor=blue,citecolor=blue,urlcolor=blue]{hyperref}
\usepackage{microtype}

\newtheorem{definition}{Definition}

\newtheorem{theorem}{Theorem}
\newtheorem{proposition}{Proposition}
\newtheorem{lemma}{Lemma}
\newtheorem{corollary}{Corollary}
\newtheorem{remark}{Remark}
\newtheorem{example}{Example}
\newtheorem{conjecture}{Conjecture}

\newcommand{\R}{\mathbb R}
\newcommand{\Prb}{\mathbb P}

\newcommand{\pl}{\operatorname{pl}}

\newcommand{\Unif}{\operatorname{Unif}}

\title{Revisiting the Behrens--Fisher Problem: Validity-First Optimality}
\author{
Xiao Wang \qquad Chuanhai Liu \\[2pt]
Department of Statistics, Purdue University
}
\date{}

\begin{document}
\maketitle

\begin{abstract}
The Behrens--Fisher problem concerns inference on the difference of two normal
means when both variances are unknown and unequal.  It is a classical example in
which nuisance parameters prevent ordinary exact fixed-sample inference, and it
has long served as a benchmark for the foundations of inference.  We revisit it
through the inferential model (IM) framework of Martin and Liu.  After
conditioning and regular marginalization, the exact association is
two-dimensional, with one coordinate for the standardized mean contrast and one
for the variance ratio.  Their one-dimensional generalized marginal IM is then
best understood as a cylindrical two-dimensional predictive random set: sharp in
its mean-contrast projection, by Hsu's stochastic domination, and vacuous in the
variance ratio.  Our main result is a precise validity-first optimality: among
prior-free procedures that retain exact, uniform, finite-sample validity, the IM
interval is the shortest.  We prove minimaxity and admissibility in the
cylindrical class and, by a projection argument, extend this to rectangular and
general two-dimensional predictive random sets.  A companion tradeoff principle
shows that any adaptive procedure can only redistribute interval width across
variance-ratio regimes, never shorten it uniformly.  A Monte Carlo study bears
this out: Welch and the bootstrap under-cover, whereas the conservative fiducial
does not dominate the IM interval, being shorter only where the latter
over-covers and longer where validity binds.
\end{abstract}

\medskip
\noindent\textbf{Key words:} Admissibility; Fiducial inference; Inferential
model; Minimaxity; Nuisance parameter; Predictive random set.

\section{Introduction}
\label{sec:intro}

Let
\[
X_{ki}\sim N(\mu_k,\sigma_k^2),\qquad i=1,\ldots,n_k,\quad k=1,2,
\]
with the two samples independent, and with four unknown parameters \(\mu_1\),
\(\mu_2\), \(\sigma_1>0\), and \(\sigma_2>0\).  The parameter of interest is the
difference of means
\[
\psi=\mu_2-\mu_1.
\]
The remaining three parameters are nuisances.  They are a location parameter,
which we may take to be \(\mu_1\), together with the two variances
\(\sigma_1^2\) and \(\sigma_2^2\).  The Behrens--Fisher problem is to test
\(H_0:\psi=0\), or to form a confidence interval for \(\psi\), without assuming
\(\sigma_1^2=\sigma_2^2\).

A standard statistic for the problem is
\[
T_\psi
=
\frac{\bar X_2-\bar X_1-\psi}
{\{S_1^2/n_1+S_2^2/n_2\}^{1/2}},
\]
where \(\bar X_k\) is the sample mean and \(S_k^2\) the usual unbiased sample
variance of the \(k\)th sample, and it admits appealing interpretations in terms
of inferential models, as developed below.  Unlike the
pooled two-sample \(t\) statistic, \(T_\psi\) does not have a Student \(t\)
distribution with fixed degrees of freedom.  Its null distribution depends on
the unknown variance ratio.  This nuisance-parameter dependence is the source
of the challenge.

The literature contains several competing notions of solution, and the
disagreements among them run deeper than the problem itself.  \citet{Fisher1935}
proposed a fiducial solution that supplies direct probability statements without
a prior, although its exact frequentist calibration was soon disputed.  In a
different direction, \citet{Welch1938} and \citet{Satterthwaite1946}, and later
\citet{Welch1947}, developed the approximate-degrees-of-freedom procedure that is
now dominant in practice, whereas \citet[see also \citealp{Scheffe1970}]{Hsu1938}
offered a simple conservative alternative based on a Student \(t\) distribution
with the smaller sample degrees of freedom.  Bayesian, generalized fiducial,
confidence-distribution, and sequential treatments each add further
interpretations, surveyed by \citet{KimCohen1998}.  The sequential treatments
differ from the others in kind.  They change the sampling scheme rather than the
analysis.  By drawing a second stage of observations whose size is chosen from
the first-stage variances, \citet{Stein1945} obtains a pivot with an exact
Student \(t\) distribution and hence an exact-level solution, at the price of
extra sampling.  The Bayesian, generalized fiducial, and confidence-distribution
treatments instead retain the original fixed sample; they are made precise and
compared numerically in Section~\ref{sec:numerics}.  That a problem so simply
stated should admit so many inequivalent answers is no accident: the
Behrens--Fisher problem has long served as a proving ground for the foundations
of inference, and \citet{Barnard1995Pivotal} used it to clarify the role of
pivotal models in the fiducial argument.  Choosing among its solutions is
therefore, in part, choosing among principles of inference.

This paper focuses on the inferential model (IM) perspective of
\citet{MartinLiu2013,MartinLiu2015Marginal,MartinLiu2015Book}.  For recent
developments, reviews, and connections to fiducial and possibility-theoretic
inference, see \citet{CuiHannig2026}, \citet{Martin2026Review}, and
\citet{Martin2026reIMagine}.  IMs are
prior-free probabilistic procedures built by predicting unobserved auxiliary
variables.  Their appeal in nuisance-parameter problems is that, when the
association is regular, nuisance variables can be marginalized away before
prediction.  The Behrens--Fisher problem is one of Martin and Liu's benchmark
nonregular examples.  The main point of this paper is that the exact reduced
association leaves two auxiliary coordinates.  Martin and Liu's
one-dimensional solution should therefore be interpreted not as exact
one-dimensional marginalization, but as a special two-dimensional predictive
random set whose focal elements are infinite vertical cylinders.

Building on this interpretation, our contributions are threefold.  First, we show
that the exact reduced association is genuinely two-dimensional and that the IM
solution is the cylindrical predictive random set that predicts the variance-ratio
coordinate only trivially, by its entire range.  Second, we prove that this
solution is optimal in a precise \emph{validity-first} sense: among prior-free
procedures that retain exact finite-sample validity uniformly in the variance
ratio, it yields the shortest plausibility interval, and we establish the
accompanying minimaxity and admissibility statements.  Third, we formulate a
tradeoff principle showing that any genuinely adaptive procedure can only
redistribute interval width across variance-ratio regimes, never shorten it
uniformly without forfeiting validity; we then confirm the theory and benchmark
competing procedures (Welch, generalized fiducial, Bayesian, and bootstrap) in a
Monte Carlo study.

The remainder of the paper is organized as follows.
Section~\ref{sec:im} reviews inferential models, the notion of validity, and the
regular marginalization that the Behrens--Fisher problem defeats.
Section~\ref{sec:assoc} derives the exact reduced association and shows that it
retains two auxiliary coordinates, isolating, through a projection lemma, the
single coordinate that governs inference on \(\psi\).  Section~\ref{sec:cylinder}
represents the IM solution as a cylindrical predictive random set and
establishes its minimaxity and admissibility, first within the cylindrical
class, and then, by a symmetric-unimodality argument, among all two-dimensional
sets judged through their symmetric first-coordinate projection.
Section~\ref{sec:tradeoff} states the resulting tradeoff principle, illustrates it
on examples, and discusses its interpretation.  Section~\ref{sec:numerics}
reports a Monte Carlo study that confirms the theory and compares the Hsu
interval with Welch, generalized fiducial, Bayesian, and bootstrap alternatives.
Section~\ref{sec:conclusion} concludes with two open problems: a conjecture that
the validity-first optimality of the IM/Hsu interval persists across all
prior-free procedures, and the question, suggested by the numerical comparison, of
whether the Bayesian solution can itself be improved.

\section{Inferential Models, Conditioning, and Marginalization}
\label{sec:im}

\subsection{The inferential-model framework}

We briefly review the IM framework, which procedurally consists of three steps:
Association, Prediction, and Combination.  An association represents the sampling
model through
\[
p(X,\theta)=a(U,\theta),\qquad U\sim P_U,
\]
where \(X\) is the observable data, \(\theta\) is the unknown parameter, and
\(U\) is an unobserved auxiliary variable with known distribution.  The IM makes
inference by predicting the unobserved auxiliary variable \(U\).

\begin{definition}[Predictive random set]
A predictive random set (PRS) for \(U\) is a random closed set
\(\mathcal S\subseteq\mathbb U\), independent of \(U\), used to predict the
unobserved realized auxiliary value.
\end{definition}

The quality of a PRS is measured by how reliably it captures the realized
auxiliary value.  This is made precise through its contour function.

\begin{definition}[Validity of a PRS]
\label{def:prs-valid}
Let \(\mathcal S\) be a PRS for \(U\), and define its contour function
\[
\gamma_{\mathcal S}(u)
=
\Prb_{\mathcal S}\{\mathcal S\ni u\},
\qquad u\in\mathbb U .
\]
The PRS \(\mathcal S\) is said to be \emph{valid} for \(U\) if, with
\(U\sim P_U\) drawn independently of \(\mathcal S\),
\[
\Prb_{U}\{\gamma_{\mathcal S}(U)\leq\alpha\}\leq\alpha,
\qquad\text{for every }\alpha\in(0,1);
\]
equivalently, \(\gamma_{\mathcal S}(U)\) is stochastically no smaller than a
\(\Unif(0,1)\) random variable.  In words, the realized auxiliary value is
rarely assigned a small contour, so a valid PRS hits the truth with at least the
nominal frequency.
\end{definition}

Validity of the PRS is exactly the ingredient that transfers to validity of the
IM: when \(\mathcal S\) is valid for \(U\) in the sense of
Definition~\ref{def:prs-valid}, the plausibility function built from
\(\mathcal S\) inherits the calibration stated in Definition~\ref{def:im-valid}
below.

\begin{remark}[Validity over a class of auxiliary laws]
\label{rem:prs-valid-class}
In nonregular problems the relevant auxiliary is not a single random variable
with a known law, but a family \(\{U_\xi:\xi\in\Xi\}\) indexed by an unknown
quantity \(\xi\).  Accordingly we call \(\mathcal S\) \emph{uniformly valid} for
the class \(\{U_\xi\}\) if
\[
\sup_{\xi\in\Xi}\,
\Prb_{U_\xi}\{\gamma_{\mathcal S}(U_\xi)\leq\alpha\}\leq\alpha,
\qquad\text{for every }\alpha\in(0,1).
\]
This extension is what the later theory requires: in the Behrens--Fisher problem
the interest auxiliary is \(Z_1(\xi)\) of
\eqref{eq:bf-first}, whose law depends on the unknown variance-ratio index
\(\xi\), so a single fixed law is unavailable and uniform validity over
\(\xi\) is the natural calibration target.  Nevertheless, this corresponds to a
usual parameter-free auxiliary variable, because it implicitly predicts the cdf
of \(Z_1(\xi)\), which has the unit (standard uniform) distribution.
\end{remark}

Inference about \(\theta\) is obtained by the combination step, which propagates
the prediction-based uncertainty to the parameter space.  Given observed
\(X=x\), define
\[
\Theta_x(u)=\{\theta:p(x,\theta)=a(u,\theta)\},
\qquad
\Theta_x(\mathcal S)=\bigcup_{u\in\mathcal S}\Theta_x(u).
\]
For an assertion \(A\subseteq\Theta\), the plausibility is
\[
\pl_x(A;\mathcal S)
=
\Prb_{\mathcal S}\{\Theta_x(\mathcal S)\cap A\neq\varnothing\}.
\]

\begin{definition}[Validity of an IM]
\label{def:im-valid}
An IM is valid for assertions about \(\theta\) if, for every
\(A\subseteq\Theta\) and every \(\alpha\in(0,1)\),
\[
\sup_{\theta\in A}
\Prb_{\theta}\{\pl_X(A;\mathcal S)\leq \alpha\}
\leq \alpha.
\]
\end{definition}

In words, validity is the prior-free analogue of frequentist calibration: false
assertions do not systematically receive large plausibility, and, correspondingly,
true assertions do not systematically receive small plausibility.  It is this
property, rather than any appeal to a prior, that licenses the plausibility
intervals produced by an IM.

Validity is obtained by choosing valid PRSs with an appropriate
coverage property.  For example, for a continuous scalar auxiliary, the
one-dimensional estimation-oriented default nested PRS can be written, after
transforming the auxiliary to \(\Unif(0,1)\), as
\[
\mathcal S=\{u:|u-1/2|\leq |W-1/2|\},\qquad W\sim \Unif(0,1).
\]
For the Behrens--Fisher problem, we will use two-dimensional PRSs.

\subsection{Conditional inferential models and dimension reduction}
\label{subsec:cim}

Before any nuisance parameter is marginalized away, the IM framework supplies a
formal device for reducing the dimension of the auxiliary variable.  This is the
conditional IM of \citet{MartinLiu2015Conditional}.  The idea is that when the
baseline association carries more auxiliary variables than parameters, certain
functions of the auxiliary become \emph{observed}: they are pinned down by the
data through the association and need no longer be predicted.  Conditioning the
IM on these observed functions is the conditional-IM operation.  It leaves a
lower-dimensional auxiliary still to be predicted, and it does so without
sacrificing validity but with improved efficiency.

For the two-sample normal model the raw association is
\[
X_{ki}=\mu_k+\sigma_k U_{ki},
\qquad U_{ki}\overset{\text{iid}}{\sim}N(0,1),
\qquad i=1,\ldots,n_k,\ k=1,2,
\]
with \(n_1+n_2\) auxiliary variables but only four parameters.  Within each
sample, the direction of the standardized residual vector is a function of the
auxiliary alone and is observed once the data are seen; it carries no
information about \((\mu_k,\sigma_k)\).  Conditioning on these observed residual
configurations is the conditional-IM step, and it collapses each sample to its
mean and its standard deviation.  The retained association is exactly
\begin{equation}
\label{eq:retained-assoc}
\bar X_k=\mu_k+\sigma_k n_k^{-1/2}U_{1k},
\qquad
S_k=\sigma_k U_{2k},
\qquad k=1,2,
\end{equation}
with the four auxiliary variables \((U_{11},U_{12},U_{21},U_{22})\) used in
Section~\ref{sec:assoc}.  Consequently the four-dimensional statistic
\((\bar X_1,\bar X_2,S_1,S_2)\) is not imported by an external appeal to
classical sufficiency.  It is produced by the conditional-IM operation itself.
This is the precise sense in which the ``standard statistic'' \(T_\psi\) named in
the opening of Section~\ref{sec:intro}, together with the variance-ratio
statistic that accompanies it, is IM-derived rather than externally imposed.  The
marginalization analyzed in the remainder of the paper begins from this
conditionally reduced, four-dimensional association; the explicit reduction is
carried out in Section~\ref{subsec:reduced}.

\subsection{Regular marginalization and its failure}

Let \(\theta=(\psi,\xi)\), where \(\psi\) is the interest parameter and
\(\xi\) is nuisance.  Martin and Liu call an association regular if it can be
rewritten as
\begin{align}
\bar p(X,\psi)&=\bar a(V_1,\psi), \label{eq:regular-interest}\\
c(X,V_2,\psi,\xi)&=0, \label{eq:regular-nuisance}
\end{align}
with \(V=(V_1,V_2)\sim P_V\), and such that, for every
\((x,v_2,\psi)\), there exists a \(\xi\) satisfying
\eqref{eq:regular-nuisance}.  In this case the \(V_2\)-equation contains no
direct information about \(\psi\); hence inference about \(\psi\) can be
based on \eqref{eq:regular-interest} alone.

The Behrens--Fisher problem fails this regular separation: it admits no
rewriting of the form \eqref{eq:regular-interest}--\eqref{eq:regular-nuisance}
in which the interest equation is nuisance-free.  Its interest-coordinate
auxiliary distribution depends on the nuisance parameter, so the reduction to
\eqref{eq:regular-interest} alone is unavailable.  Martin and Liu therefore use
generalized marginalization: replace the nuisance-dependent auxiliary by a
nuisance-free stochastic upper bound.

\section{The Two-Dimensional Behrens--Fisher Association}
\label{sec:assoc}

\subsection{The reduced association}
\label{subsec:reduced}

Recall the conditionally reduced association \eqref{eq:retained-assoc} of
Section~\ref{subsec:cim}, which carries the four auxiliary variables
\((U_{11},U_{12},U_{21},U_{22})\), distributed as
\[
U_{1k}\sim N(0,1),\qquad
(n_k-1)U_{2k}^2\sim\chi^2_{n_k-1},
\]
independently.  Write \(\bar Y=\bar X_2-\bar X_1\) for the mean contrast and
\[
f(s_1,s_2)=\left(\frac{s_1^2}{n_1}+\frac{s_2^2}{n_2}\right)^{1/2}
\]
for its estimated standard error.  Combining the two mean equations in
\eqref{eq:retained-assoc}, which marginalizes out a linear function of
\(\mu_1\) and \(\mu_2\), gives the equivalent association
\[
\bar Y=\psi+f(\sigma_1,\sigma_2)U_1,
\qquad
S_k=\sigma_k U_{2k},\quad k=1,2,
\]
with \(U_1\sim N(0,1)\).

Define
\[
\xi
=
\frac{\sigma_1^2/n_1}
{\sigma_1^2/n_1+\sigma_2^2/n_2}
\in(0,1),
\]
and
\[
Z_2=\frac{U_{22}^2}{U_{21}^2}.
\]
Then the reduced association can be written as
\begin{align}
T_\psi
&=
\frac{\bar Y-\psi}{f(S_1,S_2)}
=
Z_1(\xi), \label{eq:bf-first}\\
R
&=
\frac{n_1S_2^2}{n_2S_1^2}
=
Z_2\frac{1-\xi}{\xi}, \label{eq:bf-second}
\end{align}
where
\[
Z_1(\xi)
=
\frac{U_1}
{\{\xi U_{21}^2+(1-\xi)U_{22}^2\}^{1/2}}.
\]

We have written the reduced association in terms of the two auxiliary
coordinates \(Z_1(\xi)\) and \(Z_2\).  Whether this two-dimensionality is
essential, or only an artifact of the particular parametrization, is precisely
the question of regularity in the sense of
\eqref{eq:regular-interest}--\eqref{eq:regular-nuisance}.  Were the association
regular, the interest equation \eqref{eq:bf-first} could be rewritten with a
nuisance-free auxiliary, and a one-dimensional marginal IM would suffice.  The
following proposition records the obstruction: no reparametrization can decouple
the interest coordinate from the variance ratio.  This is the formal fact that
the constructions of the rest of the paper must accommodate.

\begin{proposition}[Nonregularity]
\label{prop:nonregular}
The reduced Behrens--Fisher association \eqref{eq:bf-first}--\eqref{eq:bf-second}
is not regular in the IM sense of Martin and Liu: it cannot be brought to the
form \eqref{eq:regular-interest}--\eqref{eq:regular-nuisance} with a
nuisance-free interest equation.
\end{proposition}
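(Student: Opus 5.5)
We argue by contradiction, exploiting that the interest equation of a regular rewriting must have a nuisance-free law. Suppose a regular rewriting \eqref{eq:regular-interest}--\eqref{eq:regular-nuisance} exists, with \(\theta=(\psi,\xi)\) and the remaining nuisances (the overall scale, and the location already absorbed). We read the requirement that \(\bar p(X,\psi)=\bar a(V_1,\psi)\) hold with \(V_1\sim P_V\) fixed as a statement about sampling distributions. Fix \(\psi\). Then the statistic \(\bar p(X,\psi)\) has the same law under every value of the nuisance parameters \((\mu_1,\sigma_1,\sigma_2)\) consistent with that \(\psi\). So \(\bar p(\cdot,\psi)\) is an ancillary for the nuisance within the submodel \(\{\psi\ \text{fixed}\}\). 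The rewriting must also be nontrivial, that is, \(\bar a(v_1,\cdot)\) must carry information on \(\psi\). This means the law of \(\bar p(X,\psi')\) under \(\psi\ne\psi'\) must differ from its law under \(\psi'\) for some nuisance values.

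The first step reduces to the four-dimensional statistic. By the conditional-IM step of Section~\ref{subsec:cim}, \(\bar p\) may be taken as a function of \((\bar Y,S_1,S_2)\) after location marginalization. The family is invariant under common scaling and under location shifts preserving \(\psi\). We therefore plan to show that any such distribution-constant statistic is, almost surely, a function of the maximal invariants \((T_\psi,R)\). The argument is that the family of laws of \((\bar Y,S_1,S_2)\) for fixed \(\psi\) is a full-rank exponential family in \((\sigma_1^2,\sigma_2^2)\), hence boundedly complete. A Basu-type argument then makes any ancillary independent of the complete sufficient statistic, so an ancillary must be degenerate along the orbits, i.e.\ a function of \((T_\psi,R)\).

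The second step uses the equations \eqref{eq:bf-first}--\eqref{eq:bf-second}. The pair \((T_\psi,R)\) has joint law equal to that of \((Z_1(\xi),Z_2(1-\xi)/\xi)\). Its distribution depends on \(\xi\) through \(R\) alone in its marginal, and through the conditional law of \(T_\psi\) given \(R\). We must show that no nonconstant measurable \(g(T_\psi,R)\) has a \(\xi\)-free law. Otherwise \(\bar p\) is constant and the interest equation is vacuous, contradicting nontriviality. We plan to compute the law of \((T_\psi,R)\) explicitly: given \(R\), \(T_\psi\) is a scaled Student-type variable, and the joint density in \((t,r)\) is an explicit analytic function. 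That density has the form \(h(t,r)\,\xi^{a}(1-\xi)^{b}\,(\xi+(1-\xi)r\cdots)^{-c}\), with the \(\xi\)-dependence entering through a one-parameter exponential-like term in \(\log r\) and \((t,r)\) jointly. Varying \(\xi\) over \((0,1)\) generates a family whose likelihood ratios separate points of \((t,r)\)-space modulo the symmetry \(t\mapsto -t\). Completeness of this one-parameter family for the statistic \((|t|\text{-orbit},r)\), in the sense that \(E_\xi[\phi]=c\) for all \(\xi\) forces \(\phi\equiv c\), then rules out nonconstant ancillaries.

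The main obstacle is this last completeness step, because \(\xi\) is one-dimensional while \((t,r)\) is two-dimensional, so standard exponential-family completeness does not apply directly. The plan is to Laplace/Mellin-transform in \(r\). Writing \(\lambda=(1-\xi)/\xi\), \(R/\lambda\) has a known \(F\) law, and the identity \(E_\lambda\phi(T,R)=c\) for all \(\lambda\) is a Mellin convolution equation in \(r\). We expect this to force \(E[\phi(T,R)\mid R=r]\) to be constant. Even so, it only controls the conditional mean of \(\phi\) given \(R\), not \(\phi\) itself. If full completeness fails, we fall back on the weaker contradiction actually needed for regularity. A regular interest equation must determine, for each \(x\), the set of \(\psi\) values, and \(\psi\)-dependence enters only through \(T_\psi\). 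We would then show directly that the law of \(T_\psi\) given \(R\) varies with \(\xi\): its conditional scale is \(\{(\xi+(1-\xi)\cdot)\}\)-dependent, as seen from \eqref{eq:bf-first}. This makes any \(\psi\)-informative function carry \(\xi\) in its distribution.
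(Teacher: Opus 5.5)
\textbf{The target of your contradiction is false.} Your argument uses only two consequences of regularity: that $\bar p(X,\psi)$ has a nuisance-free law for each fixed $\psi$, and that it is informative about $\psi$. These two properties cannot yield a contradiction, because a statistic with both exists.

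Take $\bar p(X,\psi)=\operatorname{sign}(\bar Y-\psi)=\operatorname{sign}(T_\psi)$. It equals $\operatorname{sign}(U_1)$, which is $\pm1$ with probability $\tfrac12$ for every $(\sigma_1,\sigma_2)$. Yet under a true $\psi\neq\psi'$ the law of $\operatorname{sign}(\bar Y-\psi')$ is not symmetric, so it is informative about $\psi$.

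This statistic is a nonconstant function of $(T_\psi,R)$ with a $\xi$-free law. So the claim your second step sets out to prove is false. Your caveat ``modulo $t\mapsto-t$'' does not rescue it, because this odd statistic is exactly the $\psi$-informative one. The same example refutes the last sentence of your fallback.

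What excludes such pivots is the part of the definition you never invoke: the companion equation $c(x,v_2,\psi,\xi)=0$ must be solvable in $\xi$ for \emph{every} $(x,v_2,\psi)$. With $V_1=\operatorname{sign}(U_1)$, the discarded $|U_1|$ must be carried by $V_2$, e.g.\ $V_2=(|U_1|,U_{21},U_{22})$. Then $S_k=\sigma_kU_{2k}$ fixes $\sigma_k$, and $|\bar Y-\psi|=f(\sigma_1,\sigma_2)|U_1|$ restricts $\psi$, so solvability fails. Any argument that aims to exclude all rewritings has to run through this solvability requirement, not through ancillarity alone.

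\textbf{Your first step is also unsound.} For fixed $\psi$, the law of $(\bar Y,S_1^2,S_2^2)$ is a \emph{curved} exponential family: two parameters, but a three-dimensional natural statistic $((\bar Y-\psi)^2,S_1^2,S_2^2)$.
\begin{itemize}
\item It is not complete, since $E\{(\bar Y-\psi)^2-S_1^2/n_1-S_2^2/n_2\}=0$ for all $(\sigma_1,\sigma_2)$. This incompleteness is the classical Behrens--Fisher obstruction.
\item The bounded statistic $\operatorname{sign}(\bar Y-\psi)$ shows directly that $(\bar Y,S_1,S_2)$ is not boundedly complete in the $\psi$-submodel.
\item Even granting completeness, Basu's theorem would make every ancillary function of the sufficient statistic constant outright, not ``a function of $(T_\psi,R)$''. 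Reducing to the maximal invariant would also require the ancillary to be invariant, which nothing guarantees.
\end{itemize}

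\textbf{Comparison with the paper.} The paper attempts much less than you do: it stays with the displayed representation and makes two observations.
\begin{itemize}
\item The law of $Z_1(\xi)$ genuinely moves with $\xi$. It tends to $t_{\nu_1}$ and $t_{\nu_2}$ at the two boundaries. When $\nu_1=\nu_2=\nu$, it is $t_{2\nu}$ at $\xi=\tfrac12$ versus $t_\nu$ at the boundary.
\item The companion equation returns $\xi=Z_2/(Z_2+R)$ only through the unobserved $Z_2$, so it cannot fix that law.
\end{itemize}
Your fallback roughly reproduces the first observation, via the conditional law of $T_\psi$ given $R$. It has nothing corresponding to the second.
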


\begin{proof}
Regularity in the sense of
\eqref{eq:regular-interest}--\eqref{eq:regular-nuisance} would require an
auxiliary coordinate for \(\psi\) whose distribution does not depend on the
nuisance, together with a companion equation that contributes no information
about \(\psi\).  We show that neither requirement can be met.

\emph{Step 1: the interest auxiliary is nuisance-dependent.}
Fix \(\xi\in(0,1)\) and write \(Q_k=(n_k-1)U_{2k}^2\sim\chi^2_{\nu_k}\) with
\(\nu_k=n_k-1\), independent of \(U_1=N\sim N(0,1)\).  Then \(U_{2k}^2=Q_k/\nu_k\)
and
\[
Z_1(\xi)
=
\frac{U_1}{\{\xi U_{21}^2+(1-\xi)U_{22}^2\}^{1/2}}
\overset{d}{=}
\frac{N}{\{\xi Q_1/\nu_1+(1-\xi)Q_2/\nu_2\}^{1/2}} .
\]
Thus, conditionally on \((Q_1,Q_2)\), \(Z_1(\xi)\) is centered normal with a
random scale, so its marginal law is a scale mixture of \(N(0,1)\) whose mixing
law is that of the denominator.  This mixing law varies with \(\xi\), and the
variation is genuine rather than removable by a change of auxiliary variable.
To exhibit this, it suffices to display two values of \(\xi\) giving distinct
laws.  As \(\xi\to1\) the denominator converges to \((Q_1/\nu_1)^{1/2}\), so
\(Z_1(\xi)\Rightarrow t_{\nu_1}\); as \(\xi\to0\) it converges to
\((Q_2/\nu_2)^{1/2}\), so \(Z_1(\xi)\Rightarrow t_{\nu_2}\).  When
\(\nu_1\neq\nu_2\) these limits already differ.  When \(\nu_1=\nu_2=\nu\), take
instead \(\xi=\tfrac12\): there \(\xi U_{21}^2+(1-\xi)U_{22}^2=(Q_1+Q_2)/(2\nu)\)
with \(Q_1+Q_2\sim\chi^2_{2\nu}\), whence \(Z_1(\tfrac12)\sim t_{2\nu}\neq
t_{\nu}\).  In every case the family \(\{\,\text{law of }Z_1(\xi):\xi\in(0,1)\,\}\)
is non-degenerate, so the interest auxiliary is not nuisance-free and
\eqref{eq:bf-first} cannot play the role of the regular interest equation
\eqref{eq:regular-interest}.

\emph{Step 2: the companion equation does not restore regularity.}
Regularity could still hold if \eqref{eq:bf-second} determined \(\xi\) from
observables alone, thereby fixing the law in Step~1.  Solving
\eqref{eq:bf-second} for \(\xi\) gives
\[
\xi=\frac{Z_2}{Z_2+R},
\qquad
R=\frac{n_1S_2^2}{n_2S_1^2}\ \text{observed},\quad
Z_2=\frac{U_{22}^2}{U_{21}^2}\ \text{unobserved}.
\]
Hence \(\xi\) is not pinned down by the data; it is recovered only through the
unobserved auxiliary \(Z_2\), which must itself be predicted.  Eliminating the
nuisance parameter merely trades it for a second auxiliary coordinate.

The two steps together show that no rewriting of the form
\eqref{eq:regular-interest}--\eqref{eq:regular-nuisance} with a nuisance-free
interest equation exists.  The association therefore remains genuinely
two-dimensional after all regular marginalization has been exhausted.
\end{proof}

\subsection{Auxiliary geometry and the projection lemma}

Marginalization has reduced the prediction problem to the two-dimensional
auxiliary pair \((Z_1(\xi),Z_2)\): this is the \emph{effective} auxiliary, in
the parameter-free sense of Remark~\ref{rem:prs-valid-class}, on which a
predictive random set now operates.  The candidate set it induces for \(\psi\) is
the corresponding PRS-relevant quantity, and the geometry of this section makes
both objects explicit.  Accordingly, let
\[
A_\xi=(A_{1,\xi},A_2)=(Z_1(\xi),Z_2)
\in\mathbb A=\R\times\R_+.
\]
For a two-dimensional PRS \(\mathcal S\subseteq\mathbb A\), the marginal
candidate set for \(\psi\) is
\begin{equation}
\Psi_x(\mathcal S)
=
\left\{\psi:
\exists \xi\in(0,1),\ \exists(a_1,a_2)\in\mathcal S
\text{ such that }
T_\psi=a_1,\ R=a_2(1-\xi)/\xi
\right\}.
\label{eq:psi-def}
\end{equation}
Because, for every \(R>0\) and \(a_2>0\), there exists
\[
\xi=\frac{a_2}{a_2+R}\in(0,1),
\]
the second equation imposes no restriction on \(\psi\) once \(a_2>0\) is
available.  Therefore only the first-coordinate projection matters for
inference on \(\psi\).  This is stated formally by the following lemma.

\begin{lemma}[Projection lemma]
\label{lem:projection}
Define the first-coordinate projection
\[
\Pi_1(\mathcal S)=\{a_1:\exists a_2>0\text{ with }(a_1,a_2)\in\mathcal S\}.
\]
Then, for the candidate set \(\Psi_x(\mathcal S)\) defined in
\eqref{eq:psi-def},
\[
\psi\in\Psi_x(\mathcal S)
\quad\Longleftrightarrow\quad
T_\psi\in\Pi_1(\mathcal S).
\]
\end{lemma}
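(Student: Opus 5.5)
The plan is to prove the equivalence by unpacking the definition \eqref{eq:psi-def} and treating the two directions separately. Throughout, the data \(x\) are fixed, so \(T_\psi\) is a known function of \(\psi\) and \(R=n_1S_2^2/(n_2S_1^2)\) is a fixed number. We assume \(S_1,S_2>0\), which holds almost surely under the continuous normal model, so that \(R\in(0,\infty)\). The proof needs only the elementary fact already noted before the lemma: for \(R>0\), the map \(\xi\mapsto(1-\xi)/\xi\) is a bijection of \((0,1)\) onto \((0,\infty)\).

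\emph{Forward direction.} Suppose \(\psi\in\Psi_x(\mathcal S)\). By \eqref{eq:psi-def} there are \(\xi\in(0,1)\) and \((a_1,a_2)\in\mathcal S\) with \(T_\psi=a_1\) and \(R=a_2(1-\xi)/\xi\). Since \(\mathcal S\subseteq\mathbb A=\R\times\R_+\), we have \(a_2\ge 0\). The second equation excludes \(a_2=0\): it would force \(R=0\), contradicting \(R>0\). Hence \(a_2>0\), so \(a_1\in\Pi_1(\mathcal S)\), that is, \(T_\psi\in\Pi_1(\mathcal S)\).

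\emph{Reverse direction.} Suppose \(T_\psi\in\Pi_1(\mathcal S)\). Then there is \(a_2>0\) with \((T_\psi,a_2)\in\mathcal S\). Set \(a_1=T_\psi\) and \(\xi=a_2/(a_2+R)\), which lies in \((0,1)\) because \(a_2,R>0\). A one-line check gives
\[
a_2(1-\xi)/\xi=a_2\cdot\frac{R/(a_2+R)}{a_2/(a_2+R)}=R .
\]
So the triple \((\xi,a_1,a_2)\) witnesses \(\psi\in\Psi_x(\mathcal S)\).

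There is no substantive obstacle; the argument is purely set-theoretic. The only point needing care is the boundary of \(\R_+\), which could contain \(a_2=0\). This is why \(\Pi_1\) is defined with the strict condition \(a_2>0\), and why the forward direction invokes \(R>0\) almost surely. If \(\R_+\) is read as \((0,\infty)\), that step becomes vacuous. As a remark, it will also be useful to note that the equivalence holds pointwise for every realization of a random set \(\mathcal S\). Consequently, plausibilities \(\pl_x(\{\psi\};\mathcal S)=\Prb_{\mathcal S}\{T_\psi\in\Pi_1(\mathcal S)\}\) depend on \(\mathcal S\) only through its projection, and this is the form in which the lemma is used later.
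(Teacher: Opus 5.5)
Your proof is correct and follows the paper's argument essentially line for line: both directions rest on $R>0$ almost surely, and the reverse direction uses the same explicit witness $\xi=a_2/(a_2+R)$. The only cosmetic difference is in the forward direction. The paper solves the second equation directly as $a_2=R\xi/(1-\xi)>0$. You instead first take $a_2\ge 0$ from $\mathcal S\subseteq\R\times\R_+$ and then rule out $a_2=0$.
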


\begin{proof}
Recall that \(R>0\) almost surely, since \(R=n_1S_2^2/(n_2S_1^2)\) with
\(S_1,S_2>0\) almost surely.

(\(\Rightarrow\))  Suppose \(\psi\in\Psi_x(\mathcal S)\).  By the definition
\eqref{eq:psi-def} there exist \(\xi\in(0,1)\) and \((a_1,a_2)\in\mathcal S\)
with \(T_\psi=a_1\) and \(R=a_2(1-\xi)/\xi\).  The second equality forces
\(a_2=R\,\xi/(1-\xi)>0\), so \((a_1,a_2)\in\mathcal S\) has \(a_2>0\) and
\(a_1=T_\psi\); hence \(T_\psi\in\Pi_1(\mathcal S)\).

(\(\Leftarrow\))  Suppose \(T_\psi\in\Pi_1(\mathcal S)\).  Then there exists
\(a_2>0\) with \((T_\psi,a_2)\in\mathcal S\).  Set
\(\xi=a_2/(a_2+R)\); because \(a_2>0\) and \(R>0\), we have \(\xi\in(0,1)\), and a
direct computation gives \(a_2(1-\xi)/\xi=a_2\cdot(R/a_2)=R\).  Thus the witness
\((\xi,(T_\psi,a_2))\) satisfies the conditions in \eqref{eq:psi-def} with
\(T_\psi=a_1\), so \(\psi\in\Psi_x(\mathcal S)\).
\end{proof}

\begin{corollary}
The plausibility interval for \(\psi\) induced by any two-dimensional PRS
\(\mathcal S\) depends only on its random projection \(\Pi_1(\mathcal S)\).
\end{corollary}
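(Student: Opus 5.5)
The plan is to read the corollary directly off Lemma~\ref{lem:projection} and the combination step. First I would fix the observed data $x$. This fixes $R>0$, since $S_1,S_2>0$ almost surely, and it fixes the map $\psi\mapsto T_\psi=(\bar Y-\psi)/f(S_1,S_2)$. That map is a strictly decreasing affine bijection of $\R$, because $f(S_1,S_2)>0$. For every realization of the random set $\mathcal S$, Lemma~\ref{lem:projection} gives the set identity
\[
\Psi_x(\mathcal S)=\{\psi: T_\psi\in\Pi_1(\mathcal S)\}=T^{-1}\bigl(\Pi_1(\mathcal S)\bigr).
\]
So the random candidate set for $\psi$ is a fixed, data-determined measurable function of the random projection $\Pi_1(\mathcal S)$ alone.

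Next I would turn to plausibility. For any assertion $A\subseteq\R$ about $\psi$,
\[
\pl_x(A;\mathcal S)=\Prb_{\mathcal S}\{\Psi_x(\mathcal S)\cap A\neq\varnothing\}=\Prb_{\mathcal S}\{\Pi_1(\mathcal S)\cap T(A)\neq\varnothing\},
\]
where $T(A)=\{T_\psi:\psi\in A\}$. This depends on $\mathcal S$ only through the law of $\Pi_1(\mathcal S)$. In particular, the plausibility contour $\psi\mapsto\pl_x(\{\psi\};\mathcal S)=\Prb\{T_\psi\in\Pi_1(\mathcal S)\}$ is the contour function $\gamma_{\Pi_1(\mathcal S)}$ evaluated at $T_\psi$. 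The plausibility interval $\{\psi:\pl_x(\{\psi\};\mathcal S)>\alpha\}$ is therefore $T^{-1}$ of the upper level set of $\gamma_{\Pi_1(\mathcal S)}$. It follows that two PRSs with equally distributed projections yield identical plausibility intervals.

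The only delicate point is measurability: $\Pi_1(\mathcal S)$ must be a legitimate random set for these probabilities to be defined. It need not be closed, since the projection of a closed set in $\R\times\R_+$ can fail to be closed. I would handle this by treating the events $\{t\in\Pi_1(\mathcal S)\}=\{\mathcal S\cap(\{t\}\times(0,\infty))\neq\varnothing\}$ as hitting events of $\mathcal S$. These are measurable for random closed sets by the standard Effros/hitting $\sigma$-algebra, after approximating the open half-line by compact subsets. With that in place, the rest is the routine substitution above, which I expect to go through without difficulty.
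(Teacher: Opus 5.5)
Your proposal is correct and is essentially the paper's argument: the paper gives no separate proof and reads the corollary directly off the Projection Lemma, using the realization-wise identity \(\Psi_x(\mathcal S)=\{\psi:T_\psi\in\Pi_1(\mathcal S)\}\) exactly as you do. Your explicit formula \(\pl_x(A;\mathcal S)=\Prb_{\mathcal S}\{\Pi_1(\mathcal S)\cap T(A)\neq\varnothing\}\) and your measurability remark for the possibly non-closed projection supply details that the paper leaves implicit.
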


This observation is central.  It says that a two-dimensional PRS may attempt
to predict the variance-ratio auxiliary \(Z_2\), but the induced marginal
inference for \(\psi\) is controlled by the projection of that PRS onto the
mean-contrast coordinate.

\section{The IM Cylinder, Rectangular Predictive Random Sets, and Optimality}
\label{sec:cylinder}

This section is the technical core of the paper, and its material is
correspondingly dense, so a brief roadmap may help.  We develop, in increasing
generality, the precise sense in which the IM solution is optimal, always in the
\emph{validity-first} sense of shortest interval length subject to exact, uniform,
finite-sample validity.  We first identify the solution as a cylindrical
predictive random set built on Hsu's stochastic-domination lemma, and show that
within the cylindrical class it is both minimax and admissible.  We then enlarge
the class to rectangular predictive random sets and prove a projection lower
bound.  Finally, using the symmetric unimodality of the interest auxiliary, we
establish that the IM interval is the shortest uniformly valid plausibility
interval among all two-dimensional predictive random sets judged through their
symmetric first-coordinate projection.  Each optimality statement is, at bottom, a
statement about the projection of a predictive random set onto the mean-contrast
axis.

\subsection{The cylindrical predictive random set}

The cylindrical interpretation rests on a stochastic domination result of
\citet{Hsu1938}, which \citet{MartinLiu2015Marginal} reproduce and invoke.
Because this result is load-bearing for every optimality statement below, we
state it explicitly rather than asserting it inline.  Intuitively it says two
things.  First, however the unknown variance ratio is configured, the symmetric
two-sided tail of the interest auxiliary \(Z_1(\xi)\) never exceeds that of a
single Student \(t\) law with \(m=\min(n_1,n_2)-1\) degrees of freedom.  Second,
this bound is not loose: it is attained in the limit at the least-favorable
variance ratio.  We cite \citet{Hsu1938} for the
uniform-in-\(c\) (monotone symmetric-tail) form and do not reproduce the
argument here.

\begin{lemma}[Hsu uniform-in-\(c\) symmetric-tail domination]
\label{lem:hsu}
Let \(m=\min(n_1,n_2)-1\) and let \(t_m\) denote the Student
\(t\) distribution with \(m\) degrees of freedom.  Then, for every
\(\xi\in(0,1)\),
\[
\Prb_\xi\{|Z_1(\xi)|>c\}
\leq
\Prb\{|t_m|>c\},
\qquad c\geq0,
\]
i.e.\ \(Z_1(\xi)\) is dominated in the symmetric two-sided tail, uniformly in
the threshold \(c\), by \(t_m\).  Moreover the bound is sharp at the
least-favorable boundary: if \(m=n_1-1\), then \(Z_1(\xi)\Rightarrow t_m\) as
\(\xi\to1\), and if \(m=n_2-1\), then \(Z_1(\xi)\Rightarrow t_m\) as
\(\xi\to0\), so the inequality is approached arbitrarily closely there.
\end{lemma}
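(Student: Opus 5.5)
Write \(\nu_k=n_k-1\) and \(Q_k=\nu_kU_{2k}^2\sim\chi^2_{\nu_k}\), independent of \(U_1\sim N(0,1)\). Conditioning on \((Q_1,Q_2)\) as in Step~1 of Proposition~\ref{prop:nonregular} gives
\[
\Prb_\xi\{|Z_1(\xi)|>c\}
=
E\bigl[\,2\bar\Phi\bigl(c\,D_\xi^{1/2}\bigr)\bigr],
\qquad
D_\xi=\xi Q_1/\nu_1+(1-\xi)Q_2/\nu_2 ,
\]
where \(\bar\Phi=1-\Phi\). Likewise \(\Prb\{|t_m|>c\}=E[2\bar\Phi(cD^{1/2})]\) with \(D=Q/m\) and \(Q\sim\chi^2_m\).

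The plan is to show \(E\,g_c(D_\xi)\le E\,g_c(Q/m)\) for every \(c\ge0\), where \(g_c(d)=2\bar\Phi(c\sqrt d)\). The function \(g_c\) is decreasing in \(d\). It is also convex in \(d\): differentiating gives \(g_c'(d)=-c\,\phi(c\sqrt d)/\sqrt d\), and this is increasing in \(d\), since \(\phi(c\sqrt d)/\sqrt d\) is a product of two decreasing positive functions. So it suffices to show that \(Q/m\) dominates \(D_\xi\) in the decreasing-convex order, i.e.\ that \(D_\xi\) is larger in the increasing-concave order.

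Assume without loss of generality \(m=\nu_1\le\nu_2\). Step one writes \(Q_2/\nu_2=E[\,Q_1'/\nu_1\mid\mathcal G]\) in distribution, with \(Q_1'\sim\chi^2_{\nu_1}\). This is the standard representation of the mean of \(\nu_2\) squared normals as a conditional expectation of the mean of a random \(\nu_1\)-subset: take \(\mathcal G\) to be the symmetric \(\sigma\)-field and average over subsets. Then Jensen's inequality for the concave increasing test functions \(h\) gives \(E\,h(Q_2/\nu_2)\ge E\,h(Q_1'/\nu_1)\).

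Step two applies the same reasoning to the mixture \(D_\xi\). Let \(Y_1=Q_1/\nu_1\), and let \(Y_1'\) be an independent copy with \(Y_2=E[Y_1'\mid\mathcal G]\) as in step one. Then \(D_\xi=E[\xi Y_1+(1-\xi)Y_1'\mid\mathcal G']\). The variable \(\xi Y_1+(1-\xi)Y_1'\) is itself a conditional expectation of \(Y_1\)-type quantities: it is a convex combination of two independent copies, and this dominates a single copy in the concave order because \(E\,h(\xi Y_1+(1-\xi)Y_1')\ge \xi E\,h(Y_1)+(1-\xi)E\,h(Y_1')=E\,h(Y_1)\) by concavity. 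Chaining the two Jensen steps yields \(E\,h(D_\xi)\ge E\,h(Q_1/\nu_1)\) for all concave \(h\), and in particular for \(-g_c\). This proves the displayed inequality uniformly in \(c\) and \(\xi\).

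For sharpness, \(D_\xi\to Q_1/\nu_1\) almost surely as \(\xi\to1\), and bounded convergence gives \(Z_1(\xi)\Rightarrow t_{\nu_1}\), as already noted in Proposition~\ref{prop:nonregular}. The case \(\xi\to0\) with \(m=\nu_2\) is symmetric.

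The main obstacle is making the conditional-expectation coupling in step one rigorous when \(\nu_2\) is not a multiple of \(\nu_1\). I would handle this by averaging over all \(\nu_1\)-subsets of \(\nu_2\) iid \(\chi^2_1\) summands, which is exchangeable and exact. If a fully self-contained argument proves awkward, I would fall back on citing \citet{Hsu1938} for this step, as the paper itself does.
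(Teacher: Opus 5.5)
Your proof is correct, and for the main inequality it takes a genuinely different route from the paper. The paper does not prove the domination. It cites \citet{Hsu1938} for the uniform-in-$c$ bound and verifies only boundary sharpness, via $Z_1(1)=U_1/U_{21}\sim t_{n_1-1}$ and continuous mapping; your last step does the same.

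You instead derive the bound from two elementary facts. First, for $c>0$ the map $g_c(d)=2\{1-\Phi(c\sqrt d)\}$ is decreasing and convex, and your derivative check is right. Second, for $\nu_1\le\nu_2$, $D_\xi=\xi Q_1/\nu_1+(1-\xi)Q_2/\nu_2$ is smaller than $Q_1/\nu_1$ in the convex order. You get this from conditional Jensen on exchangeable $\chi^2_1$ summands, followed by pointwise concavity.

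The obstacle you flag is not real. Write $Q_2=W_1+\cdots+W_{\nu_2}$ with $W_i$ iid $\chi^2_1$, and let $\mathcal G$ be the $\sigma$-field of symmetric functions of the $W_i$. Exchangeability gives $E[W_i\mid\mathcal G]=Q_2/\nu_2$ for every $i$. Hence $E[(W_1+\cdots+W_{\nu_1})/\nu_1\mid\mathcal G]=Q_2/\nu_2$, with no divisibility condition and no need to fall back on the citation.

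In step two, state explicitly that $\mathcal G'=\sigma(Q_1)\vee\mathcal G$ and that $Q_1$ is independent of the $W_i$; that is what justifies $E[Y_1'\mid\mathcal G']=Y_2$. The phrase calling $\xi Y_1+(1-\xi)Y_1'$ ``a conditional expectation of $Y_1$-type quantities'' is unneeded, since the concavity inequality you then write does the work.

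As for what each route buys: the paper's is shorter, but its load-bearing lemma rests on an external, loosely referenced theorem. Yours is self-contained and explains \emph{why} the smaller degrees of freedom are least favorable. It also gives more:
\begin{itemize}
\item Since $g_c$ is strictly convex for $c>0$ and $Y_1\neq Y_1'$ almost surely, the inequality is strict at every interior $\xi$. So the supremum over $\xi$ is reached only in the boundary limit, matching panel~(a) of Figure~\ref{fig:theory}.
\item Conditioning $D_\xi$ on the symmetric $\sigma$-field of all $\nu_1+\nu_2$ summands gives, by the same device, the companion lower bound $\Prb_\xi\{|Z_1(\xi)|>c\}\geq\Prb\{|t_{n_1+n_2-2}|>c\}$.
\end{itemize}
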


\begin{proof}
The uniform-in-\(c\) domination is \citet[Theorem on the two-sample
\(t\)-ratio]{Hsu1938}; we take it as cited.  For sharpness, setting \(\xi=1\)
makes the denominator scale \(\{\xi U_{21}^2+(1-\xi)U_{22}^2\}^{1/2}\) equal to
\(U_{21}\) almost surely, so \(Z_1(1)=U_1/U_{21}\sim t_{n_1-1}\); by the
continuous mapping theorem \(Z_1(\xi)\Rightarrow t_{n_1-1}=t_m\) as
\(\xi\to1\) when \(m=n_1-1\), and symmetrically as \(\xi\to0\) when
\(m=n_2-1\).
\end{proof}

The boundary limit in Lemma~\ref{lem:hsu} is what makes the supremum over
\(\xi\) of the tail probability equal to (and not merely bounded by) the
\(t_m\) tail.  These two facts are uniform domination and boundary sharpness.
Both are used repeatedly below.

Martin and Liu's generalized marginal IM can be represented as the
two-dimensional PRS
\begin{equation}
\mathcal S_C^{ML}=[-C,C]\times\R_+,
\qquad C\sim |t_m|.
\label{eq:ml-cylinder}
\end{equation}
Its focal elements are cylinders.  It predicts \(Z_1(\xi)\) by the sharp Hsu
bound and makes a vacuous prediction about \(Z_2\).

For a point assertion \(\{\psi\}\),
\[
\pl_x(\psi)
=
\Prb\{T_\psi\in[-C,C]\}
=
\Prb\{C\geq |T_\psi|\}
=
2\{1-G_m(|T_\psi|)\},
\]
where \(G_m\) is the cdf of \(t_m\).  Thus the
\(100(1-\alpha)\%\) plausibility interval is
\[
\bar Y
\pm
t_{1-\alpha/2,m}
\left(\frac{S_1^2}{n_1}+\frac{S_2^2}{n_2}\right)^{1/2}.
\]
This is the Hsu--Scheffe conservative interval.  Its exact uniform coverage,
contrasted with the slight under-coverage of the Welch approximation near the
least-favorable boundary, is illustrated numerically in
Figure~\ref{fig:theory}(b) of Section~\ref{sec:numerical}.

\subsection{Minimaxity and admissibility in the cylindrical class}

We now formulate a class-restricted optimality statement.  It is deliberately
restricted because global admissibility among all possible random sets is a
stronger problem and depends on the loss criterion.

\begin{definition}[Cylindrical symmetric PRS]
\label{def:cyl-sym}
A cylindrical symmetric PRS is a random set of the form
\[
\mathcal S_D=[-D,D]\times\R_+,
\]
where \(D\geq0\) is a scalar random variable.
\end{definition}

\begin{definition}[Uniform validity for marginal inference]
A cylindrical PRS \(\mathcal S_D\) is uniformly valid for \(\psi\) if, for all
\(\xi\in(0,1)\) and all \(\alpha\in(0,1)\),
\[
\Prb_\xi\{\pl_X(\psi;\mathcal S_D)\leq\alpha\}\leq \alpha.
\]
Equivalently, the induced two-sided plausibility intervals have at least
\(1-\alpha\) coverage uniformly in \(\xi\).
\end{definition}

Let \(H_D\) be the cdf of \(D\) in Definition~\ref{def:cyl-sym}, and let
\[
d_\alpha=H_D^{-1}(1-\alpha).
\]
The induced interval half-width multiplier at level \(1-\alpha\) is
\(d_\alpha\).

The question is then which uniformly valid cylindrical PRS makes this multiplier
as small as possible.  The Martin--Liu/Hsu choice does, and exactly meets the
constraint, as the next theorem records.

\begin{theorem}[Cylindrical minimaxity]
Among cylindrical symmetric PRSs, the IM/Hsu choice
\[
D=C\sim |t_m|,\qquad m=\min(n_1,n_2)-1,
\]
has the smallest possible level-\(\alpha\) half-width multiplier subject to
uniform validity:
\[
t_{1-\alpha/2,m}
=
\inf_{\mathcal S_D}
\left\{d_\alpha:
\sup_{\xi\in(0,1)}
\Prb_\xi\{|Z_1(\xi)|>d_\alpha\}\leq \alpha
\right\}.
\]
\end{theorem}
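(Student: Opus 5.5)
The plan is to reduce the infimum to a statement about one scalar function and then apply the two halves of Lemma~\ref{lem:hsu} separately. For a cylindrical symmetric PRS the constraint involves $D$ only through the number $d_\alpha$. So the feasible set is really a set of thresholds,
\[
\mathcal F_\alpha=\Bigl\{d\geq 0:\ \sup_{\xi\in(0,1)}\Prb_\xi\{|Z_1(\xi)|>d\}\leq\alpha\Bigr\},
\]
and every $d\geq0$ equals $d_\alpha$ for some $D$ (take $D$ degenerate or any law with that quantile). The infimum over $\mathcal S_D$ therefore equals $\inf\mathcal F_\alpha$.

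Define the envelope $g(c)=\sup_{\xi}\Prb_\xi\{|Z_1(\xi)|>c\}$. This function is nonincreasing in $c$, which makes $\mathcal F_\alpha$ an up-set $[c^*,\infty)$ or $(c^*,\infty)$. The first step is the upper bound on the infimum. By the uniform-in-$c$ domination in Lemma~\ref{lem:hsu},
\[
g(c)\le \Prb\{|t_m|>c\}
\]
for all $c$. At $c=t_{1-\alpha/2,m}$ the right side equals $\alpha$, so $t_{1-\alpha/2,m}\in\mathcal F_\alpha$. This also shows that the IM/Hsu choice $D=C\sim|t_m|$ is feasible: its quantile is $d_\alpha=H_C^{-1}(1-\alpha)=t_{1-\alpha/2,m}$, by continuity and symmetry of $t_m$.

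The second step, and the main one, is the lower bound: no $d<t_{1-\alpha/2,m}$ is feasible. Here I use the boundary sharpness in Lemma~\ref{lem:hsu}. Without loss of generality take $m=n_1-1$, so $Z_1(\xi)\Rightarrow t_m$ as $\xi\to1$. The limit law is continuous, so $\{|x|>d\}$ is a continuity set, and the portmanteau theorem gives
\[
\Prb_\xi\{|Z_1(\xi)|>d\}\to\Prb\{|t_m|>d\}.
\]
Fix $d<t_{1-\alpha/2,m}$. Since $t_m$ has positive density everywhere, strict monotonicity of the tail gives $\Prb\{|t_m|>d\}>\alpha$. Hence for $\xi$ close enough to $1$ the constraint already fails, so $g(d)>\alpha$ and $d\notin\mathcal F_\alpha$. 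The case $m=n_2-1$ is the same with $\xi\to0$.

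Combining the two steps, $\inf\mathcal F_\alpha=t_{1-\alpha/2,m}$ and it is attained by $D=C$, which proves the displayed identity. The step I expect to need the most care is the lower bound. There I must make sure to use strict inequality of the $t_m$ tail together with convergence at the single threshold $d$, rather than invoking some uniform convergence. I also need to note that the supremum over the open interval $(0,1)$ is approached but not attained, which is harmless because the argument only needs $g(d)>\alpha$.
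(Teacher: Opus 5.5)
Your proposal is correct and follows essentially the same route as the paper: Hsu's uniform domination makes $t_{1-\alpha/2,m}$ feasible, with the choice $D=C$ attaining it, and boundary sharpness rules out every smaller threshold. The paper packages both steps as the single identity $\sup_\xi \Prb_\xi\{|Z_1(\xi)|>d\}=\Prb\{|t_m|>d\}$. You instead spell out the details it leaves implicit: the continuity-set step and the strict monotonicity of the $t_m$ tail.
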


\begin{proof}
Uniform validity of \(\mathcal S_D\) requires
\[
\sup_{\xi\in(0,1)}
\Prb_\xi\{|Z_1(\xi)|>d_\alpha\}
\leq\alpha.
\]
By Lemma~\ref{lem:hsu} (uniform domination gives ``\(\leq\)'' and boundary
sharpness gives the reverse, since \(Z_1(\xi)\Rightarrow t_m\) at the
least-favorable boundary),
\[
\sup_{\xi\in(0,1)}
\Prb_\xi\{|Z_1(\xi)|>d\}
=
\Prb\{|t_m|>d\}.
\]
Therefore the smallest admissible \(d\) is the solution of
\[
\Prb\{|t_m|>d\}=\alpha,
\]
namely \(d=t_{1-\alpha/2,m}\).  This is exactly the
IM/Hsu multiplier.
\end{proof}

The equality \(\sup_\xi\Prb_\xi\{|Z_1(\xi)|>c_\alpha\}=\alpha\) drives this
minimaxity statement.  It combines domination in the interior with sharpness at
the least-favorable boundary.  It is illustrated numerically in
Figure~\ref{fig:theory}(a) of Section~\ref{sec:numerical}.

Minimaxity controls the worst case at a single level \(\alpha\).  A stronger
property is admissibility: that the multiplier cannot be improved across all
levels simultaneously.  This holds as well.

\begin{theorem}[Cylindrical admissibility]
In the cylindrical symmetric class, no uniformly valid PRS can have
\[
d_\alpha\leq t_{1-\alpha/2,m}\quad\text{for all }\alpha
\]
with strict inequality for some \(\alpha\).
\end{theorem}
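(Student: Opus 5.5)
The argument is by contradiction, and it reduces admissibility to the single-level minimaxity statement of the preceding theorem. Suppose some uniformly valid cylindrical symmetric PRS \(\mathcal S_D\) has \(d_\alpha\le t_{1-\alpha/2,m}\) for every \(\alpha\in(0,1)\), with strict inequality at some level \(\alpha_0\). We show that \(\mathcal S_D\) then fails uniform validity at level \(\alpha_0\). This means the weaker property, validity at the single level \(\alpha_0\), already forces \(d_{\alpha_0}\ge t_{1-\alpha_0/2,m}\), so no dominating procedure can exist.

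\emph{Step 1: translate validity at \(\alpha_0\) into a tail condition on \(|Z_1(\xi)|\).}
For a cylinder, \(\pl_x(\psi;\mathcal S_D)=\Prb\{D\ge |T_\psi|\}=1-H_D(|T_\psi|^-)\). Under \(\Prb_\xi\) at the true \(\psi\), Lemma~\ref{lem:projection} and \eqref{eq:bf-first} give \(T_\psi=Z_1(\xi)\). Hence
\[
\{\pl_X(\psi;\mathcal S_D)\le\alpha\}=\{1-H_D(|Z_1(\xi)|^-)\le\alpha\}.
\]
To conclude that this event contains \(\{|Z_1(\xi)|>d_\alpha\}\), I use the quantile definition \(d_\alpha=H_D^{-1}(1-\alpha)=\inf\{d:H_D(d)\ge1-\alpha\}\). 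If \(z>d_\alpha\), there is a point \(d\in(d_\alpha,z)\) with \(H_D(d)\ge 1-\alpha\), and therefore \(H_D(z^-)\ge1-\alpha\). It follows that
\[
\Prb_\xi\{|Z_1(\xi)|>d_\alpha\}\le \Prb_\xi\{\pl_X\le\alpha\}\le\alpha .
\]
So uniform validity implies the constraint \(\sup_\xi\Prb_\xi\{|Z_1(\xi)|>d_\alpha\}\le\alpha\) used in the minimaxity theorem. This holds for any law of \(D\), including ones with atoms or flat pieces.

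\emph{Step 2: apply the sharp bound at \(\alpha_0\).}
By Lemma~\ref{lem:hsu}, uniform domination together with boundary sharpness gives
\[
\sup_\xi\Prb_\xi\{|Z_1(\xi)|>d\}=\Prb\{|t_m|>d\}.
\]
For the \(t_m\) law I need a sup over \(\xi\) that is approached at the boundary for each fixed \(d\). Since \(Z_1(\xi)\Rightarrow t_m\) and \(t_m\) is continuous, the Portmanteau theorem gives convergence of \(\Prb_\xi\{|Z_1|>d\}\) to \(\Prb\{|t_m|>d\}\).

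Now take \(d=d_{\alpha_0}<t_{1-\alpha_0/2,m}\). The function \(d\mapsto\Prb\{|t_m|>d\}\) is strictly decreasing on \([0,\infty)\) because the \(t_m\) density is positive everywhere. Therefore
\[
\Prb\{|t_m|>d_{\alpha_0}\}>\alpha_0 .
\]
Combined with the boundary limit, this yields some \(\xi\) near the least-favorable endpoint with \(\Prb_\xi\{|Z_1(\xi)|>d_{\alpha_0}\}>\alpha_0\). That contradicts Step 1, so no such \(\mathcal S_D\) exists.

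\emph{Main obstacle.} The delicate part is Step 1, converting the plausibility-based validity definition into the tail inequality at the exact quantile \(d_\alpha\). The concerns are ties and left limits when \(H_D\) has atoms, and the fact that \(\pl\le\alpha\) is defined with a weak inequality. I would handle this through the left-limit argument above. If that fails, a fallback is to perturb \(\alpha_0\) slightly: strict inequality at \(\alpha_0\) plus monotonicity of \(d_\alpha\) gives strict inequality on a neighborhood, and continuity of the \(t_m\) tail then absorbs the slack.
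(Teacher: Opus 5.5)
Your proof is correct and follows the paper's argument. Strict inequality at a single level \(\alpha_0\) gives \(\Prb\{|t_m|>d_{\alpha_0}\}>\alpha_0\), and the boundary sharpness in Lemma~\ref{lem:hsu} then produces some \(\xi\) near the least-favorable endpoint that violates validity. Your Step~1 derives \(\Prb_\xi\{|Z_1(\xi)|>d_\alpha\}\le\alpha\) from the plausibility-based definition through the quantile and left-limit argument; this makes explicit a link the paper takes for granted, and it remains sound when \(H_D\) has atoms, so the fallback you sketch is not needed.
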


\begin{proof}
If \(d_{\alpha_0}<t_{1-\alpha_0/2,m}\) for some \(\alpha_0\), then
\[
\Prb\{|t_m|>d_{\alpha_0}\}>\alpha_0.
\]
By the boundary sharpness in Lemma~\ref{lem:hsu}, there are nuisance values
\(\xi\) arbitrarily near the least favorable boundary for which
\[
\Prb_\xi\{|Z_1(\xi)|>d_{\alpha_0}\}>\alpha_0.
\]
This violates uniform validity.  Therefore strict improvement of the
half-width multiplier at any level is impossible in this class.
\end{proof}

\subsection{Rectangular predictive random sets and the projection bound}

The cylindrical class fixes the variance-ratio coordinate at its vacuous
prediction.  Motivated by the preceding results, we now ask whether a genuinely
two-dimensional predictive random set, one that also attempts to predict the
variance ratio, can do better.  We therefore investigate a more flexible,
rectangular class, and find that it cannot: the same projection argument forces
the Hsu lower bound to reappear.

Consider a broader geometric class with focal elements
\[
\mathcal S_{C,R}=[-C,C]\times(0,R).
\]
A one-parameter subfamily may be indexed by a fixed slope \(\lambda>0\):
\[
\mathcal S_R^{(\lambda)}=[-\lambda R,\lambda R]\times(0,R).
\]
The IM cylinder is the endpoint
\[
R=\infty,
\qquad
\mathcal S_C^{ML}=[-C,C]\times(0,\infty).
\]

For finite \(R\), prediction succeeds only if both
\[
|Z_1(\xi)|\leq C
\quad\text{and}\quad
Z_2<R.
\]
Thus finite rectangles impose an additional burden: they must predict the
variance-ratio auxiliary as well as the mean-contrast auxiliary.

\begin{proposition}[Projection lower bound]
Let \(\mathcal S\) be any two-dimensional PRS whose first-coordinate
projection is almost surely a symmetric interval \([-D,D]\).  If
\(\mathcal S\) is uniformly valid for the two-dimensional auxiliary
\((Z_1(\xi),Z_2)\), then \([-D,D]\) is uniformly valid for \(Z_1(\xi)\).
Consequently, its level-\(\alpha\) projection half-width must satisfy
\[
d_\alpha\geq t_{1-\alpha/2,m}.
\]
\end{proposition}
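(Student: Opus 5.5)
Two steps: show that uniform validity of $\mathcal S$ passes to the projection $[-D,D]$, then reduce to the cylindrical minimaxity theorem.

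\emph{Step 1 (monotonicity of contours under projection).} For every $(a_1,a_2)\in\mathbb A$, membership $(a_1,a_2)\in\mathcal S$ implies $a_1\in\Pi_1(\mathcal S)=[-D,D]$. Hence the contour functions satisfy
\[
\gamma_{\mathcal S}(a_1,a_2)=\Prb_{\mathcal S}\{\mathcal S\ni(a_1,a_2)\}\leq\Prb_{\mathcal S}\{|a_1|\leq D\}=\gamma_{[-D,D]}(a_1).
\]
Evaluating at $(Z_1(\xi),Z_2)$ gives $\gamma_{[-D,D]}(Z_1(\xi))\geq\gamma_{\mathcal S}(Z_1(\xi),Z_2)$ pointwise. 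Therefore, for every $\xi$ and every $\alpha$,
\[
\Prb_\xi\{\gamma_{[-D,D]}(Z_1(\xi))\leq\alpha\}\leq\Prb_\xi\{\gamma_{\mathcal S}(Z_1(\xi),Z_2)\leq\alpha\}\leq\alpha,
\]
where the last inequality is uniform validity of $\mathcal S$ in the sense of Remark~\ref{rem:prs-valid-class}. Taking the supremum over $\xi$ yields uniform validity of $[-D,D]$ for $Z_1(\xi)$.

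\emph{Step 2 (reduction to the cylindrical class).} By Lemma~\ref{lem:projection}, the plausibility of $\{\psi\}$ under $\mathcal S$ equals $\Prb\{|T_\psi|\leq D\}=\gamma_{[-D,D]}(T_\psi)$. So $\mathcal S$ induces exactly the same plausibility function for $\psi$ as the cylindrical PRS $\mathcal S_D=[-D,D]\times\R_+$, and it is judged by the same multiplier $d_\alpha=H_D^{-1}(1-\alpha)$.

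Now $\gamma_{[-D,D]}(z)=1-H_D(|z|^-)$. Uniform validity at level $\alpha$ therefore forces
\[
\sup_\xi\Prb_\xi\{|Z_1(\xi)|>d_\alpha\}\leq\alpha,
\]
because on the event $|Z_1(\xi)|>d_\alpha$ we have $\gamma\leq 1-H_D(d_\alpha)\leq\alpha$. The cylindrical minimaxity theorem, through Lemma~\ref{lem:hsu}'s boundary sharpness, then gives $d_\alpha\geq t_{1-\alpha/2,m}$. Equivalently, argue as in the admissibility proof: if $d_\alpha<t_{1-\alpha/2,m}$, then $\Prb\{|t_m|>d_\alpha\}>\alpha$, and some $\xi$ near the least-favorable boundary violates validity.

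\emph{Main obstacle.} The delicate point is the quantile bookkeeping in Step 2 when $H_D$ has atoms or flat regions, where the inequality $\gamma\leq\alpha$ on $\{|Z_1|>d_\alpha\}$ needs care. I would handle it by using the generalized inverse and right-continuity. If that fails, I would run the argument at $d_\alpha+\epsilon$ and let $\epsilon\downarrow0$, using continuity of the $t_m$ tail. Step 1 itself is a straightforward set inclusion.
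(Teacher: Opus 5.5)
Your proposal is correct and follows essentially the paper's route. The paper uses the inclusion $\{Z_1(\xi)\notin[-D,D]\}\subseteq\{(Z_1(\xi),Z_2)\notin\mathcal S\}$ and then invokes the cylindrical minimax theorem; you phrase the same inclusion at the contour level, $\gamma_{\mathcal S}(a_1,a_2)\le\gamma_{[-D,D]}(a_1)$, which matches the contour-based definition of validity more precisely than the paper's event-level statement. Your bookkeeping with the generalized inverse and $H_D(|z|^-)$ already yields $\{|Z_1(\xi)|>d_\alpha\}\subseteq\{\gamma_{[-D,D]}(Z_1(\xi))\le\alpha\}$ even when $H_D$ has atoms or flat regions, so the $\epsilon$ fallback is unnecessary.
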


\begin{proof}
If \(A_\xi=(Z_1(\xi),Z_2)\in\mathcal S\), then necessarily
\(Z_1(\xi)\in\Pi_1(\mathcal S)=[-D,D]\).  Therefore
\[
\{Z_1(\xi)\notin[-D,D]\}
\subseteq
\{A_\xi\notin\mathcal S\}.
\]
Uniform validity of \(\mathcal S\) implies uniform validity of the projected
PRS.  The lower bound on \(d_\alpha\) then follows from the cylindrical
minimax theorem.
\end{proof}

This proposition explains why finite rectangular PRSs cannot uniformly improve
the Hsu projection.  They may produce different behavior by making the
projected half-width depend on the second coordinate, but any global
improvement must confront the same least favorable \(t_m\) boundary.

\subsection{Symmetric unimodality and projected optimality}

The preceding projection argument can be strengthened by using the shape of
the density of the main auxiliary variable.  For each fixed \(\xi\),
\[
Z_1(\xi)
=
\frac{U_1}
{\{\xi U_{21}^2+(1-\xi)U_{22}^2\}^{1/2}}
\]
has a symmetric unimodal density.  Indeed, conditional on
\((U_{21},U_{22})\), \(Z_1(\xi)\) is centered normal with a random scale; a
mixture of centered normal densities is again symmetric and unimodal with mode
at zero.

This fact matters because, for a symmetric unimodal density, the interval
centered at the mode is the shortest interval with a given probability
content.  More generally, the symmetric decreasing rearrangement inequality
implies that, among all measurable sets of a fixed Lebesgue measure, the set
with largest probability under a symmetric unimodal density is the centered
interval of that measure.  This is the one-dimensional form of Anderson's
inequality \citep{Anderson1955}.

\begin{definition}[Projected plausibility set]
Let \(\mathcal S\) be any two-dimensional PRS in
\(\mathbb A=\R\times\R_+\).  For a fixed level \(\alpha\), define the
projected plausibility set
\[
B_\alpha(\mathcal S)
=
\{a_1:\Prb_{\mathcal S}(a_1\in\Pi_1(\mathcal S))>\alpha\}.
\]
The induced plausibility region for \(\psi\) is
\[
\{\psi:T_\psi\in B_\alpha(\mathcal S)\}.
\]
\end{definition}

\begin{definition}[Projected efficiency]
At level \(1-\alpha\), one projected PRS is no less efficient than another if
its projected plausibility set has no larger Lebesgue measure.  If the
projected plausibility sets are intervals, this is equivalent to comparing
ordinary interval length for \(\psi\).
\end{definition}

\begin{theorem}[Shortest uniformly valid projection]
Let \(m=\min(n_1,n_2)-1\) and let \(c_\alpha=t_{1-\alpha/2,m}\).  Consider
all two-dimensional PRSs whose induced level-\(1-\alpha\) plausibility region
for \(\psi\) is determined by a measurable projected set \(B_\alpha\subseteq
\R\) and is uniformly valid:
\[
\inf_{\xi\in(0,1)}
\Prb_\xi\{Z_1(\xi)\in B_\alpha\}
\geq 1-\alpha.
\]
Then
\[
|B_\alpha|\geq 2c_\alpha,
\]
where \(|B_\alpha|\) denotes Lebesgue measure.  Equality is attained by the
symmetric first-coordinate projection of the IM cylinder \(\mathcal S_C^{ML}\)
of \eqref{eq:ml-cylinder}, that is, by the centered interval
\[
B_\alpha^{ML}=[-c_\alpha,c_\alpha],
\]
which is the defining occurrence of \(B_\alpha^{ML}\) used throughout.
If the \(t_m\) density is strictly decreasing in \(|z|\), equality holds only
up to null sets.
\end{theorem}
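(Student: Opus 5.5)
The proof combines the boundary sharpness in Lemma~\ref{lem:hsu} with the one-dimensional Anderson (rearrangement) inequality. Suppose \(B_\alpha\) is measurable, uniformly valid, and satisfies \(|B_\alpha|<2c_\alpha\); we derive a contradiction. The case \(|B_\alpha|=\infty\) is trivial, so assume \(|B_\alpha|=2b\) with \(b<c_\alpha\). Assume without loss of generality that \(m=n_1-1\), so the least-favorable boundary is \(\xi\to1\); the case \(m=n_2-1\) is symmetric, with \(\xi\to0\).

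\emph{Step 1: pass to the limit law.} Fix a sequence \(\xi_j\to1\). By Lemma~\ref{lem:hsu}, \(Z_1(\xi_j)\Rightarrow t_m\). Weak convergence alone does not give \(\Prb_{\xi_j}\{Z_1(\xi_j)\in B_\alpha\}\to\Prb\{t_m\in B_\alpha\}\) for an arbitrary measurable set, so the convergence must be upgraded to total variation. The densities of \(Z_1(\xi)\) are normal scale mixtures,
\[
p_\xi(z)=E\bigl[V_\xi^{1/2}\phi(zV_\xi^{1/2})\bigr],\qquad V_\xi=\xi U_{21}^2+(1-\xi)U_{22}^2 .
\]
As \(\xi\to1\), \(V_\xi\to U_{21}^2\) almost surely. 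Since \(v^{1/2}\phi(zv^{1/2})\le(2\pi)^{-1/2}v^{1/2}\) and \(E V_\xi^{1/2}\) is bounded, dominated convergence gives \(p_\xi(z)\to g_m(z)\) pointwise, where \(g_m\) is the \(t_m\) density. Scheff\'e's lemma then yields \(L^1\) convergence. Uniform validity therefore passes to the limit:
\[
\Prb\{t_m\in B_\alpha\}=\lim_j\Prb_{\xi_j}\{Z_1(\xi_j)\in B_\alpha\}\ge1-\alpha .
\]
I expect this total-variation upgrade to be the main technical obstacle, since everything else is soft. A fallback is to use the inner regularity of the measure to approximate \(B_\alpha\) from inside by finite unions of intervals, which are continuity sets for weak convergence.

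\emph{Step 2: Anderson/rearrangement.} The \(t_m\) density \(g_m\) is symmetric and nonincreasing in \(|z|\). For any measurable \(B\) with \(|B|=2b\), split
\[
\int_B g_m-\int_{[-b,b]}g_m=\int_{B\setminus[-b,b]}g_m-\int_{[-b,b]\setminus B}g_m .
\]
The two difference sets have equal measure. On the first, \(g_m\le g_m(b)\); on the second, \(g_m\ge g_m(b)\). Hence the difference is \(\le0\), that is,
\[
\Prb\{t_m\in B_\alpha\}\le\Prb\{|t_m|\le b\}.
\]
Because \(g_m>0\) everywhere and \(b<c_\alpha\),
\[
\Prb\{|t_m|\le b\}<\Prb\{|t_m|\le c_\alpha\}=1-\alpha,
\]
which contradicts Step 1. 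Hence \(|B_\alpha|\ge2c_\alpha\).

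\emph{Step 3: attainment and uniqueness.} Equality is attained because the IM cylinder gives \(\pl\) equal to \(2\{1-G_m(|a_1|)\}\) on the first coordinate. Its projected set is therefore \(\{|a_1|<c_\alpha\}\), which agrees with \([-c_\alpha,c_\alpha]\) up to a null set. Uniform validity of this set is exactly Lemma~\ref{lem:hsu} at \(c=c_\alpha\), and its measure is \(2c_\alpha\).

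For uniqueness, suppose \(|B_\alpha|=2c_\alpha\) and \(B_\alpha\) is valid. Steps 1 and 2 force equality in the rearrangement bound. In the displayed difference, equality requires \(g_m=g_m(c_\alpha)\) almost everywhere on both \(B_\alpha\setminus[-c_\alpha,c_\alpha]\) and \([-c_\alpha,c_\alpha]\setminus B_\alpha\). When \(g_m\) is strictly decreasing in \(|z|\), the level set \(\{g_m=g_m(c_\alpha)\}\) is just \(\{\pm c_\alpha\}\), so both difference sets are null. Thus \(B_\alpha=[-c_\alpha,c_\alpha]\) up to null sets.
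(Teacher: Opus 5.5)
Your proof is correct. Its skeleton matches the paper's: transfer uniform validity to the boundary law \(t_m\), apply the rearrangement bound, get attainment from Hsu's domination, and get uniqueness from strict monotonicity. The genuine difference is the transfer step, and there your route is the sound one.

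The paper passes to \(G=\operatorname{int}B_\alpha\) and invokes the open-set portmanteau inequality \(\Prb\{T_m\in G\}\le\liminf_j\Prb_{\xi_j}\{Z_1(\xi_j)\in G\}\). That inequality bounds \(\Prb\{T_m\in G\}\) from \emph{above}, so combining it with \(\liminf_j\Prb_{\xi_j}\{Z_1(\xi_j)\in B_\alpha\}\ge1-\alpha\) yields nothing. In fact the paper's intermediate claim \(\Prb\{T_m\in G\}\ge1-\alpha\) is false for \(B_\alpha=[-c_\alpha,c_\alpha]\setminus\mathbb{Q}\), which is uniformly valid but has empty interior. The closed-set half of the portmanteau theorem points the right way, but it controls only the closure, whose Lebesgue measure can far exceed \(|B_\alpha|\).

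Your argument avoids this. Pointwise convergence of the normal scale-mixture densities plus Scheff\'e's lemma gives total-variation convergence. Hence \(\Prb_{\xi_j}\{Z_1(\xi_j)\in B\}\to\Prb\{T_m\in B\}\) for every measurable \(B\), which is exactly what an arbitrary measurable \(B_\alpha\) requires. Your explicit difference-set computation also proves the equality case, which the paper only asserts.

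Two small repairs are needed:
\begin{itemize}
\item For dominated convergence, boundedness of \(E V_\xi^{1/2}\) is not the right hypothesis. Use the single integrable bound \(V_\xi^{1/2}\le\max(|U_{21}|,|U_{22}|)\le|U_{21}|+|U_{22}|\) instead.
\item Drop the fallback. Inner approximation by finite unions of intervals fails, since a fat Cantor set contains no interval. A symmetric-difference approximation combined with the uniform density bound \(p_\xi(z)\le(2\pi)^{-1/2}E(|U_{21}|+|U_{22}|)\) would work, but Scheff\'e already suffices.
\end{itemize}
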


\begin{proof}
We first show that uniform validity transfers to the boundary law
\(T_m\sim t_m\) without any continuity-set assumption on \(B_\alpha\).  Let
\(\xi_j\) be a least favorable sequence, so that \(Z_1(\xi_j)\Rightarrow t_m\)
by Lemma~\ref{lem:hsu}.  Let \(G=\operatorname{int}B_\alpha\) be the interior
of \(B_\alpha\), an open set with \(G\subseteq B_\alpha\).  By the open-set part
of the portmanteau theorem \citep[Theorem~2.1]{Billingsley1999}, which states
that \(\Prb\{T_m\in G\}\leq\liminf_j\Prb_{\xi_j}\{Z_1(\xi_j)\in G\}\) for every
open \(G\) and requires no continuity-set hypothesis,
\[
\Prb\{T_m\in G\}
\;\leq\;
\liminf_{j\to\infty}\Prb_{\xi_j}\{Z_1(\xi_j)\in G\}
\;\leq\;
\liminf_{j\to\infty}\Prb_{\xi_j}\{Z_1(\xi_j)\in B_\alpha\}
,
\]
and uniform validity makes the right-hand side at least \(1-\alpha\).  Hence
\[
\Prb\{T_m\in B_\alpha\}\;\geq\;\Prb\{T_m\in G\}\;\geq\;1-\alpha,
\]
where \(T_m\sim t_m\).  This portmanteau \emph{liminf} step uses only weak
convergence and the inclusion \(G\subseteq B_\alpha\); it needs neither that
\(B_\alpha\) be a \(t_m\)-continuity set nor an interchange of limit and
probability.  Consequently the conclusion holds for an arbitrary measurable
\(B_\alpha\), without the continuity-set assumption on \(B_\alpha\) that a
direct limiting argument (passing the probability of \(B_\alpha\) itself to the
limit) would otherwise require.  Among all measurable subsets of \(\R\) with a fixed
Lebesgue measure, the probability under the symmetric unimodal \(t_m\) density
is maximized by the centered interval of the same measure.  Therefore, if
\(|B_\alpha|<2c_\alpha\), then
\[
\Prb\{T_m\in B_\alpha\}
\leq
\Prb\{|T_m|<|B_\alpha|/2\}
<
\Prb\{|T_m|\leq c_\alpha\}
=1-\alpha,
\]
contradicting uniform validity.  The centered interval
\([-c_\alpha,c_\alpha]\) has exactly probability \(1-\alpha\) under the
least favorable \(t_m\) distribution and is uniformly valid by Hsu's
domination.  Strict unimodality gives uniqueness up to null sets.
\end{proof}

\begin{corollary}[Projected admissibility among two-dimensional PRSs with a
symmetric-interval projection]
\label{cor:proj-admiss}
Consider any two-dimensional PRS whose induced level-\(1-\alpha\) plausibility
region for \(\psi\) is an ordinary interval determined by the symmetric
first-coordinate (\(Z_1\)) projection of the PRS.  No such PRS can induce, at
the same confidence level, a plausibility interval for \(\psi\) that is
uniformly shorter than the IM/Hsu interval while preserving exact
uniform validity.
\end{corollary}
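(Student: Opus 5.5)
The plan is to reduce the corollary to the Theorem on the shortest uniformly valid projection, by way of the Projection lemma (Lemma~\ref{lem:projection}) and the Projection lower bound proposition.

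First, fix a level $1-\alpha$ and a two-dimensional PRS $\mathcal S$ of the kind described. Its induced plausibility region for $\psi$ is an ordinary interval determined by the symmetric first-coordinate projection. Lemma~\ref{lem:projection} then lets me write this region as $\{\psi: T_\psi\in B_\alpha(\mathcal S)\}$, where $B_\alpha(\mathcal S)=[-d_\alpha,d_\alpha]$ is a symmetric interval; the endpoints may be open or closed, which changes nothing up to a null set. Since $T_\psi=(\bar Y-\psi)/f(S_1,S_2)$ is an affine bijection in $\psi$ with slope $-1/f(S_1,S_2)$, the interval for $\psi$ is $\bar Y\pm d_\alpha f(S_1,S_2)$. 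Its length is $|B_\alpha| f(S_1,S_2)$. The IM/Hsu interval is $\bar Y\pm c_\alpha f(S_1,S_2)$, with $c_\alpha=t_{1-\alpha/2,m}$. So for every data set, comparing lengths is the same as comparing $|B_\alpha|$ with $2c_\alpha$.

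Second, exact uniform validity means that the coverage event $\{\psi\in\text{interval}\}=\{Z_1(\xi)\in B_\alpha\}$ has probability at least $1-\alpha$ for every $\xi\in(0,1)$. This is precisely the hypothesis of the Theorem, which gives $|B_\alpha|\ge 2c_\alpha$. Alternatively, since $B_\alpha$ is a symmetric interval, the Projection lower bound proposition gives $d_\alpha\ge t_{1-\alpha/2,m}$ directly. Either way, the $\psi$-interval has length at least that of the IM/Hsu interval on every sample, so no such PRS can be uniformly shorter at this level.

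Third, I will address what "uniformly shorter" means. If it means shorter for at least one $\alpha$, with no longer interval elsewhere, then the argument above already excludes strict improvement at any single $\alpha$. If it means strictly shorter at some level while weakly shorter at all levels, the same pointwise-in-$\alpha$ bound rules it out, in parallel with the cylindrical admissibility theorem.

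The main obstacle is a subtlety in the first step: the projected half-width $d_\alpha$ of a general PRS could in principle depend on the data, for instance through $R$ if the PRS predicts $Z_2$ adaptively. In that case $B_\alpha$ is not a fixed set, and the Theorem does not apply verbatim. I would handle this by reading the hypothesis "determined by the symmetric first-coordinate projection" as saying that $B_\alpha$ is a fixed set, since $\Pi_1(\mathcal S)$ is a random set independent of the data. I would note that the data-dependent case is exactly the adaptive regime deferred to the tradeoff principle in Section~\ref{sec:tradeoff}.
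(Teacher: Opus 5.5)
Your proposal is correct and follows essentially the paper's route: use the projection lemma to write the induced interval as $\{\psi: T_\psi\in B_\alpha(\mathcal S)\}$, so that comparing lengths reduces to comparing $|B_\alpha|$ with $2c_\alpha$, then invoke the shortest uniformly valid projection theorem. Your explicit affine-map computation and your observation that $B_\alpha(\mathcal S)$ depends only on the law of $\mathcal S$, and so is data-free, just spell out what the paper's short proof leaves implicit. One small caution: your ``alternative'' via the projection lower bound proposition needs two stronger hypotheses that the corollary does not grant, namely that $\Pi_1(\mathcal S)$ itself is almost surely a symmetric interval and that $\mathcal S$ is valid for the two-dimensional auxiliary, so the theorem route is the one to keep.
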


\begin{proof}
By the projection lemma, every two-dimensional PRS induces inference on
\(\psi\) only through its symmetric first-coordinate projection
\(B_\alpha(\mathcal S)\).  For a PRS in the stated class, the induced
plausibility region is an ordinary interval determined by that projection, so
if the interval were uniformly shorter than the IM/Hsu interval the
corresponding projected set would have Lebesgue measure smaller than
\(2c_\alpha\).  This contradicts the shortest uniformly valid projection
theorem.  We emphasize that the conclusion is confined to PRSs of this
symmetric-interval-projection form; it does not assert admissibility against
PRSs whose induced \(\psi\)-region is non-interval or determined by an
asymmetric projection.
\end{proof}

\begin{remark}[Role of the variance-ratio auxiliary]
The second auxiliary coordinate \(Z_2\) is essential for the exact association
and for representing the nuisance variance ratio.  But after \(\xi\) is
existentially eliminated for marginal inference on \(\psi\), \(Z_2\) affects
the plausibility region only through the projection of the PRS onto the
\(Z_1\)-axis.  Restricting, tilting, or otherwise shaping a PRS in the
\(Z_2\)-direction cannot by itself reduce the interval for \(\psi\).  To
reduce the interval, one must reduce the projected set on the \(Z_1\)-axis,
and the theorem shows that the IM/Hsu projection is already the
shortest uniformly valid choice.
\end{remark}

\section{The Tradeoff Principle}
\label{sec:tradeoff}

\subsection{The no-uniform-shrinkage tradeoff}

The preceding results are consistent with the usual statistical intuition:
using the observed variance ratio can shorten intervals in some regimes, but
uniform validity requires compensation elsewhere.

Let a general adaptive plausibility interval be written as
\[
C_\alpha(X)
=
\left[
\bar Y-d_\alpha(R)f(S_1,S_2),
\bar Y+d_\alpha(R)f(S_1,S_2)
\right],
\]
where \(R=n_1S_2^2/(n_2S_1^2)\).  The Hsu--Scheffe rule has
\[
d_\alpha(R)\equiv t_{1-\alpha/2,m}.
\]

\begin{theorem}[Conditional impossibility of uniform shrinkage]
\label{thm:cond-shrink}
This is a \emph{conditional} implication: the impossibility conclusion holds
\emph{under} the positive-limsup hypothesis stated below, which is a required
hypothesis rather than a consequence proved here.  (The hypothesis is not
vacuous: explicit adaptive shrink rules satisfy it and do under-cover near the
boundary; see the numerical illustration in Section~\ref{sec:numerical}.)
Fix \(\alpha\in(0,1)\) and write
\[
c_\alpha=t_{1-\alpha/2,m}.
\]
Let
\[
C_\alpha^d(X)
=
\left[
\bar Y-d_\alpha(R)f(S_1,S_2),
\bar Y+d_\alpha(R)f(S_1,S_2)
\right]
\]
be an adaptive interval with \(d_\alpha(R)\leq c_\alpha\) for all \(R>0\).
Let \(\xi_j\) be a least favorable sequence, i.e. a sequence along which
\[
Z_1(\xi_j)\Rightarrow t_m.
\]
If
\[
\limsup_{j\to\infty}
\Prb_{\xi_j}\{|Z_1(\xi_j)|\leq c_\alpha,\,
              |Z_1(\xi_j)|>d_\alpha(R)\}
>0,
\]
then \(C_\alpha^d\) is not uniformly valid:
\[
\inf_{\xi\in(0,1)}
\Prb_{\psi,\xi}\{\psi\in C_\alpha^d(X)\}
<1-\alpha.
\]
\end{theorem}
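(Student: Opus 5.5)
The plan is to express the coverage of \(C_\alpha^d\) directly in terms of the auxiliary pair \((Z_1(\xi),Z_2)\) through the reduced association \eqref{eq:bf-first}--\eqref{eq:bf-second}. Then split the coverage event against the Hsu event \(\{|Z_1(\xi)|\leq c_\alpha\}\), and pass to the limit along the least favorable sequence using Lemma~\ref{lem:hsu}.

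\emph{Step 1 (coverage identity).} For true \((\psi,\xi)\), \(\psi\in C_\alpha^d(X)\) holds if and only if \(|T_\psi|\leq d_\alpha(R)\). By \eqref{eq:bf-first}--\eqref{eq:bf-second}, the pair \((T_\psi,R)\) has the same joint law as \((Z_1(\xi),\,Z_2(1-\xi)/\xi)\). I will assume \(d_\alpha\) is measurable, which is implicit in the statement. Then
\[
\Prb_{\psi,\xi}\{\psi\in C_\alpha^d(X)\}=\Prb_\xi\{|Z_1(\xi)|\leq d_\alpha(R)\},
\]
where \(R\) is understood as this function of \(Z_2\) and \(\xi\). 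The hypothesis of the theorem is read in the same way, with \(R\) coupled to \(Z_2\) and \(\xi_j\).

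\emph{Step 2 (splitting).} Because \(d_\alpha(R)\leq c_\alpha\), the event \(\{|Z_1|\leq d_\alpha(R)\}\) is contained in \(\{|Z_1|\leq c_\alpha\}\). The difference of these two events is \(\{|Z_1|\leq c_\alpha,\ |Z_1|>d_\alpha(R)\}\). Hence
\[
\Prb_{\xi_j}\{|Z_1(\xi_j)|\leq d_\alpha(R)\}
=\Prb_{\xi_j}\{|Z_1(\xi_j)|\leq c_\alpha\}-\Delta_j,
\]
where \(\Delta_j\) denotes the probability in the hypothesis.

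\emph{Step 3 (limit).} The set \(\{|z|\leq c_\alpha\}\) is a \(t_m\)-continuity set, since \(t_m\) has a continuous distribution. Since \(Z_1(\xi_j)\Rightarrow t_m\) by the choice of \(\xi_j\) (Lemma~\ref{lem:hsu}), the portmanteau theorem gives
\[
\Prb_{\xi_j}\{|Z_1(\xi_j)|\leq c_\alpha\}\to \Prb\{|t_m|\leq c_\alpha\}=1-\alpha.
\]
Set \(\delta=\limsup_j\Delta_j>0\). Then
\[
\liminf_j \Prb_{\psi,\xi_j}\{\psi\in C_\alpha^d\}=1-\alpha-\delta<1-\alpha.
\]
Hence some \(\xi_j\) has coverage strictly below \(1-\alpha\), and the infimum over \(\xi\in(0,1)\) is strictly less than \(1-\alpha\).

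The main obstacle is Step 1. The real content there is recognizing that \(d_\alpha(R)\) is random and correlated with \(Z_1(\xi)\) only through \(Z_2\). So the hypothesis must be interpreted jointly, rather than with \(R\) frozen. Once this coupling is made explicit, Steps 2 and 3 are routine. The only technical care needed is the continuity-set check, which is immediate here because \(c_\alpha\) is a fixed threshold, not a data-dependent one.
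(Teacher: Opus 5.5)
Your proof is correct and follows the paper's argument. Both proofs write the coverage as \(\Prb_{\psi,\xi}\{\psi\in C_\alpha^d(X)\}=\Prb_\xi\{|Z_1(\xi)|\leq d_\alpha(R)\}\), split it against the Hsu event \(\{|Z_1|\leq c_\alpha\}\) using \(d_\alpha(R)\leq c_\alpha\), and pass to the limit along the least favorable sequence. Your additions spell out what the paper's shorter proof leaves implicit: the joint coupling of \(R\) with \(Z_2\), measurability of \(d_\alpha\), the continuity-set check, and the computation \(\liminf_j(\text{coverage})=1-\alpha-\delta\).
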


\begin{proof}
For any \(\xi\),
\[
\Prb_{\psi,\xi}\{\psi\in C_\alpha^d(X)\}
=
\Prb_\xi\{|Z_1(\xi)|\leq d_\alpha(R)\}.
\]
Since \(d_\alpha(R)\leq c_\alpha\),
\[
\Prb_\xi\{|Z_1(\xi)|\leq d_\alpha(R)\}
=
\Prb_\xi\{|Z_1(\xi)|\leq c_\alpha\}
-
\Prb_\xi\{|Z_1(\xi)|\leq c_\alpha,\,
          |Z_1(\xi)|>d_\alpha(R)\}.
\]
Along the least favorable sequence,
\[
\Prb_{\xi_j}\{|Z_1(\xi_j)|\leq c_\alpha\}
\to
\Prb\{|t_m|\leq c_\alpha\}
=1-\alpha.
\]
If the second term has positive limsup, then along a subsequence the coverage
limit is strictly smaller than \(1-\alpha\).  Uniform validity is therefore
impossible.
\end{proof}

\begin{remark}
This theorem is the formal version of the width-redistribution intuition.
An adaptive procedure may shrink the Hsu multiplier on regions that are
irrelevant at the least favorable boundary, or it may use the strict
conservatism available at interior values of \(\xi\).  But it cannot produce a
nontrivial uniform shrinkage of the Hsu interval on sets that remain visible
under least favorable sequences.  To gain local efficiency while preserving
uniform validity, a genuinely adaptive method must either exploit interior
conservatism or enlarge the interval in other variance-ratio regimes.  This
mechanism is illustrated numerically in Figure~\ref{fig:theory}(c) of
Section~\ref{sec:numerical}, where two explicit shrink rules satisfying the
positive-limsup hypothesis of Theorem~\ref{thm:cond-shrink} under-cover near
the boundary.
\end{remark}

\subsection{Typical examples}

\begin{example}[Balanced sample sizes]
If \(n_1=n_2\), then \(m=n_1-1=n_2-1\), and the Hsu bound approaches the same
Student \(t_m\) distribution at both variance-ratio boundaries
\(\xi\to0\) and \(\xi\to1\).  The cylinder is symmetric not only in sign but
also in the two samples.  Adaptive procedures can still respond to the
observed variance ratio, but the least favorable boundary is present on both
sides.
\end{example}

\begin{example}[Unbalanced sample sizes]
Suppose \(n_1<n_2\).  Then \(m=n_1-1\), and the least favorable boundary is
\(\xi\to1\), where the first sample dominates the standard error.  A method
that exploits variance-ratio information may shorten intervals when the data
suggest that the larger sample carries most of the variance contribution.  But
near \(\xi\to1\), the \(t_{n_1-1}\) tail is unavoidable.  The Hsu interval is
therefore conservative away from the boundary and sharp near the boundary.
\end{example}

\begin{example}[Welch versus Hsu]
Welch's method replaces the exact nuisance-dependent distribution of
\(T_\psi\) by a Student \(t\) approximation with estimated degrees of freedom,
\[
\nu_W
=
\frac{(S_1^2/n_1+S_2^2/n_2)^2}
{(S_1^2/n_1)^2/(n_1-1)+(S_2^2/n_2)^2/(n_2-1)}.
\]
The degrees of freedom \(\nu_W\) are obtained by Satterthwaite's
moment-matching idea \citep{Satterthwaite1946}.  The estimated squared standard
error \(S_1^2/n_1+S_2^2/n_2\) is approximated by a single scaled chi-square
variable \((V/\nu)\,\chi^2_\nu\), where \(V=\sigma_1^2/n_1+\sigma_2^2/n_2\) is the
true squared standard error.  The scale and the degrees of freedom \(\nu\) are
fixed by requiring the first two moments of the two sides to agree.  The means
match automatically, and matching the variances gives
\[
\nu
=
\frac{V^2}
{(\sigma_1^2/n_1)^2/(n_1-1)+(\sigma_2^2/n_2)^2/(n_2-1)} ;
\]
replacing the unknown variances by their sample counterparts produces the
data-dependent \(\nu_W\) displayed above.  The pivot \(T_\psi\) is then referred
to a \(t_{\nu_W}\) distribution as though \(\nu_W\) were known.
Since \(\nu_W\geq m\), Welch intervals are typically shorter than Hsu
intervals.  This is why Welch is usually preferred in routine data analysis.
From the IM perspective, however, Welch is an approximation, whereas the
IM/Hsu cylinder is finite-sample uniformly valid.  The distinction is
visible numerically: across the nominal-\(0.95\) Monte Carlo study reported in
Section~\ref{sec:numerical}, the Hsu interval holds coverage at or above the
nominal level uniformly in \(\xi\) (minimum coverage \(0.9500\)--\(0.9501\)),
whereas the Welch interval dips below nominal near the least-favorable boundary
(minimum coverage \(0.9436\) for \((n_1,n_2)=(5,15)\), and \(0.9491\) and
\(0.9478\) for \((10,10)\) and \((8,20)\)).
\end{example}

\subsection{Discussion}
\label{subsec:discussion}

Martin and Liu's treatment of the Behrens--Fisher problem is best understood
as a validity-first philosophy.  It may be seen as a diagnosis, or a way of
reasoning with uncertainty, and a conservative resolution.  The diagnosis is that after
formal combining of information and regular marginalization of integrable
nuisance components, the problem still contains two auxiliary coordinates:
\[
Z_1(\xi)
=
\frac{U_1}
{\{\xi U_{21}^2+(1-\xi)U_{22}^2\}^{1/2}},
\qquad
Z_2=\frac{U_{22}^2}{U_{21}^2}.
\]
The second coordinate is not optional in the exact association; it is the
auxiliary carrier of the variance-ratio nuisance parameter.

The conservative resolution is to predict \(Z_2\) only trivially, by its entire
range \(\R_+\): the vacuous prediction that admits every possible value of
\(Z_2\) and is therefore always valid.  In the two-dimensional auxiliary space,
Martin and Liu use the cylindrical PRS
\[
[-C,C]\times\R_+,
\qquad C\sim |t_{\min(n_1,n_2)-1}|.
\]
This is an extreme PRS: it is maximally uninformative about
the variance-ratio coordinate and sharp in the mean-contrast projection.  Its
validity agrees with or follows from Hsu's stochastic domination, and its
plausibility intervals coincide with the Hsu--Scheffe conservative intervals.

The minimax and admissibility statements in this paper should be read in this
geometric sense or, more precisely, as a single \emph{validity-first optimality}
property: among prior-free procedures that retain exact uniform validity, the
IM/Hsu solution attains the shortest plausibility interval.  The IM/Hsu solution
is minimax and admissible in
the cylindrical projected class: no method that discards the second coordinate
can use a smaller worst-case two-sided critical value.  More generally, any
two-dimensional PRS whose marginal inference is judged only through its
symmetric first-coordinate projection must obey the same sharp Hsu lower
bound.

This does not mean that all possible adaptive methods are pointless.  A
genuinely two-dimensional PRS may use \(Z_2\) to redistribute interval width
across variance-ratio regimes.  Such a procedure may be shorter than Hsu for
some observed variance ratios.  But if it retains exact uniform validity, it
must either use existing conservatism or compensate with longer intervals
elsewhere.  This is the precise mathematical form of the usual intuition:
variance-ratio information can buy local efficiency, but it cannot remove the
least favorable boundary.

It is important, however, not to evaluate this redistribution by averaging
over the unknown nuisance parameter.  An average-length criterion of the form
\[
\int E_{\psi,\xi}\{|C_\alpha(X)|\}\,\pi(d\xi)
\]
requires a weighting distribution \(\pi\) on the variance-ratio parameter
\(\xi\).  Such a \(\pi\) is effectively prior or external design information
about the nuisance parameter.  That may be reasonable in a Bayesian,
empirical-Bayes, or explicitly decision-theoretic analysis, but it is not
prior-free.  In the IM setting, where the goal is validity without prior
information about the variance ratio, the natural efficiency comparisons are
uniform in \(\xi\), or pointwise for each fixed \(\xi\), not averaged across
unknown nuisance values.  From this prior-free viewpoint, a claim that an
adaptive method is ``more efficient on average'' is incomplete unless the
averaging distribution over \(\xi\) is scientifically justified.

\section{Numerical Results}
\label{sec:numerics}

\subsection{Validation of the theory}
\label{sec:numerical}

The qualitative claims above are illustrated here using a Monte Carlo study.
They are sharp Hsu domination at the least-favorable boundary,
exact-versus-approximate coverage, and the conditional shrinkage tradeoff.  All quantities are
estimated with \(B=2\times10^{6}\) independent draws per grid point and a
seeded random number generator; the figures plot these estimates directly. We
report three representative designs, \((n_1,n_2)\in\{(5,15),(10,10),(8,20)\}\),
all at nominal level \(1-\alpha=0.95\).  (Monte Carlo standard errors are of
order \(1.5\times10^{-4}\) for the tail probabilities and coverages shown, so
the plotted curves are accurate to within line width.)

The three claims are collected in the single composite
Figure~\ref{fig:theory}.  Panel~(a) shows the central result of
Lemma~\ref{lem:hsu}: the symmetric tail probability
\(P_\xi\{|Z_1(\xi)|>c_\alpha\}\), evaluated at the Hsu critical value
\(c_\alpha=t_{1-\alpha/2,m}\), as a function of the variance-ratio parameter
\(\xi\).  In every design the tail stays at or below the nominal \(\alpha=0.05\)
for all interior \(\xi\) (strictly conservative there, e.g.\ \(\approx0.033\) at
\(\xi=0.5\) for \((5,15)\)) and rises to meet \(\alpha\) exactly as \(\xi\)
approaches the least-favorable boundary, confirming both uniform domination and
boundary sharpness.

Panel~(b) of Figure~\ref{fig:theory} contrasts the finite-sample coverage of
the Hsu interval with that of Welch's approximation.  The Hsu coverage lies at
or above the nominal \(0.95\) for all \(\xi\), touching \(0.95\) only at the
boundary, while the Welch coverage drops below \(0.95\) near the boundary in
every design.  This is the precise sense in which the cylindrical
IM/Hsu PRS is exactly uniformly valid whereas Welch is only
approximately valid.

Finally, panel~(c) of Figure~\ref{fig:theory} illustrates
Theorem~\ref{thm:cond-shrink}.  Two explicit adaptive rules shrink the Hsu
multiplier.  Rule A shrinks it to \(0.9\,c_\alpha\) on the central part of the
\(R\)-distribution.  Rule B applies a uniform reduction of \(0.03\,c_\alpha\).
Both satisfy the positive-limsup hypothesis and consequently under-cover near the
least-favorable boundary (coverage falling to about \(0.933\) and \(0.946\)
respectively for \((5,15)\)), while the non-adaptive Hsu rule retains coverage
\(\geq0.95\).  This is the conditional-impossibility mechanism made concrete: a
genuine shrink region that remains visible under the least-favorable sequence
forces under-coverage.

\begin{figure}[htbp]
\centering
\includegraphics[width=\textwidth]{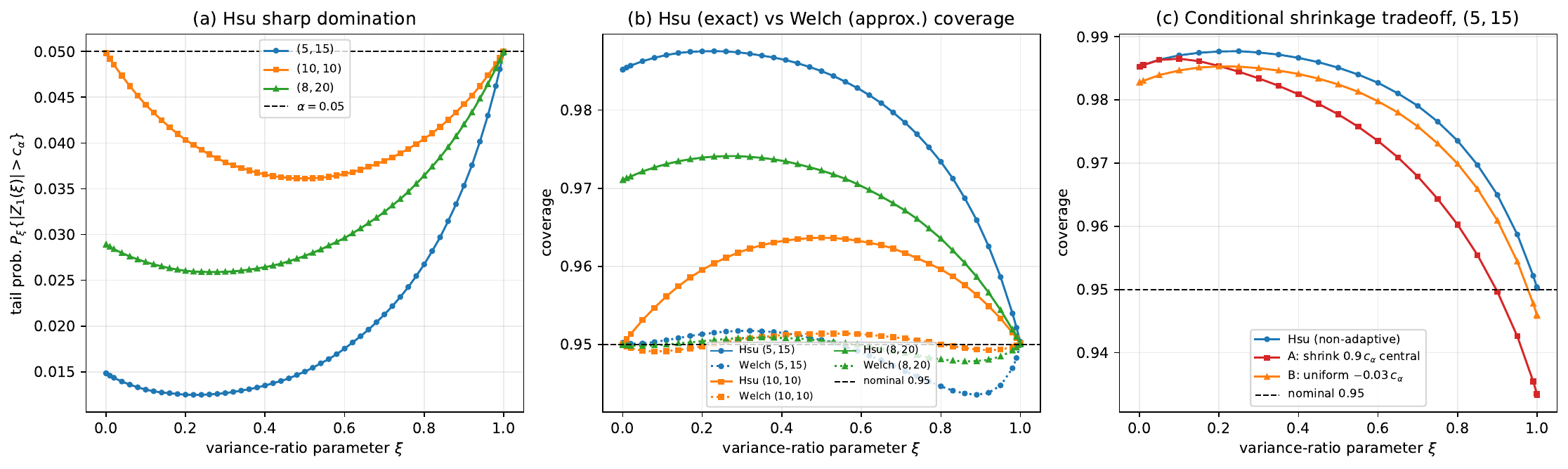}
\caption{Numerical validation of the theory, three designs
\((n_1,n_2)\in\{(5,15),(10,10),(8,20)\}\), nominal \(\alpha=0.05\).
\textbf{(a)} Hsu sharp domination: symmetric tail probability
\(P_\xi\{|Z_1(\xi)|>c_\alpha\}\) versus \(\xi\) at the Hsu critical value
\(c_\alpha=t_{1-\alpha/2,m}\); dashed line is the \(t_m\) bound \(=\alpha=0.05\).
The tail is strictly below \(\alpha\) in the interior and approaches \(\alpha\)
at the least-favorable boundary (\(\xi\to1\) for the unbalanced \((5,15)\) and
\((8,20)\) with \(m=n_1-1\); both boundaries for the balanced \((10,10)\) with
\(m=n_1-1=n_2-1\)).
\textbf{(b)} Uniform coverage of the two-sided interval for \(\psi\) versus
\(\xi\): Hsu (exact, solid) versus Welch (approximate, dotted).  Hsu holds at or
above the nominal \(0.95\) (dashed) uniformly in \(\xi\); Welch dips below
nominal near the boundary (minimum \(0.9436\), \(0.9491\), \(0.9478\) for the
three designs respectively).
\textbf{(c)} Conditional impossibility of uniform shrinkage, design \((5,15)\):
coverage versus \(\xi\) for the non-adaptive Hsu rule and two adaptive shrink
rules (A: shrink to \(0.9\,c_\alpha\) on central \(R\); B: uniform
\(-0.03\,c_\alpha\)); both adaptive rules satisfy the positive-limsup hypothesis
of Theorem~\ref{thm:cond-shrink} and dip below \(0.95\) near the boundary,
whereas Hsu does not.
Source: validated Monte Carlo, \(B=2\times10^{6}\) draws per grid point.}
\label{fig:theory}
\end{figure}

\subsection{Comparison with competing interval methods}
\label{sec:comparison}

The optimality theory of the preceding sections is a statement
about \emph{exact, finite-sample, prior-free} validity: the IM/Hsu
cylinder is the shortest interval whose coverage is at least \(1-\alpha\)
\emph{uniformly} in the variance ratio (Theorem~\ref{thm:cond-shrink} and the
shortest-projection theorem).  Competing two-sample procedures buy shorter
intervals in some regimes by \emph{adapting to the observed variance ratio}.  The
tradeoff principle predicts that any such method must pay for that adaptivity
either with conservatism elsewhere or with a loss of uniform validity.  This
section tests that prediction directly by Monte Carlo, comparing five intervals
for \(\psi=\mu_2-\mu_1\) at nominal level \(1-\alpha=0.95\):

\begin{enumerate}[label=(\roman*),itemsep=2pt]
\item \textbf{IM / Hsu--Scheffe}: \(\bar Y\pm t_{1-\alpha/2,m}\,f(S_1,S_2)\),
      \(m=\min(n_1,n_2)-1\), the cylindrical PRS of this paper, exactly and
      uniformly valid by Lemma~\ref{lem:hsu}.
\item \textbf{Welch--Satterthwaite} \citep{Welch1947,Satterthwaite1946}: the same
      pivot referred to a \(t_{\nu_W}\) with estimated degrees of freedom.
\item \textbf{Generalized fiducial / generalized confidence interval}
      \citep{Weerahandi1993}: the \(\alpha/2,\,1-\alpha/2\) quantiles of the
      generalized pivotal quantity
      \(R_\psi=\bar Y-\{(S_2/\sqrt{n_2})T_2-(S_1/\sqrt{n_1})T_1\}\),
      \(T_k\sim t_{n_k-1}\).
\item \textbf{Bayesian, independent Jeffreys priors} \citep{GhoshKim2001}: the
      central posterior credible interval for \(\mu_2-\mu_1\).
\item \textbf{Bootstrap-\(t\)} \citep{EfronTibshirani1993}: the studentized
      nonparametric bootstrap interval.
\end{enumerate}

\paragraph{An exact coincidence.}
For the mean difference, methods (iii), (iv), and Fisher's original fiducial
solution \citep{Fisher1935} are \emph{algebraically identical}: each reduces to
the law of \(\bar Y-\{(S_2/\sqrt{n_2})T_2-(S_1/\sqrt{n_1})T_1\}\) with
independent \(T_k\sim t_{n_k-1}\), which is the Behrens--Fisher distribution.  We
implemented (iii) and (iv) with independent random number streams as a check;
their estimated coverages agree to within Monte Carlo error at every grid point
(\(|\Delta|\le 0.008\), at most about three Monte Carlo standard errors of the
difference), confirming the identity numerically.
We therefore display them as a single curve, ``Fiducial \(=\) GFI \(=\)
Bayes(Jeffreys).''

\paragraph{Design.}
By location--scale invariance, the coverage of every method and its length
relative to \(\sigma_1\) depend only on \(\rho=\sigma_2^2/\sigma_1^2\) and on
\((n_1,n_2)\).  We therefore fix \(\mu_1=\mu_2=0\), \(\sigma_1=1\),
\(\sigma_2=\sqrt\rho\), so the true \(\psi=0\), and estimate coverage as the
frequency with which the interval contains \(0\).  We use
\(\rho\in\{10^{-2},\dots,10^{2}\}\) (\(25\) points, log-spaced) and
\((n_1,n_2)\in\{(3,3),(5,15),(10,10),(3,30),(5,30)\}\).  For Hsu, Welch and the
fiducial we use \(N=4\times10^{5}\) datasets per cell (Monte Carlo standard error
\(\approx 3.4\times10^{-4}\)); the fiducial band, being symmetric, is the law of
\(\bar Y\pm W\) with half-width \(W=(S_1/\sqrt{n_1})\,g(S_2\sqrt{n_1}/(S_1\sqrt{n_2}))\),
where \(g(r)\) is the \((1-\alpha/2)\)-quantile of \(|rT_2-T_1|\); tabulating
\(g\) once per design from \(10^{6}\) inner draws makes the high-resolution sweep
inexpensive.  The bootstrap-\(t\), which requires per-dataset resampling, is run
on a separate \(N=10^{4}\)/cell study with \(1{,}500\) resamples.  All random
number generators are seeded (grid seed \(2026060399\); bootstrap seed
\(99887766\)).

\paragraph{Results.}
Figure~\ref{fig:cmp-cov} shows coverage versus \(\rho\).  Welch dips below the
nominal \(0.95\) across a wide band, reaching \(0.923\) at \((n_1,n_2)=(3,30)\);
the bootstrap-\(t\) likewise under-covers, to about \(0.924\).  Neither carries a
finite-sample guarantee.  The Hsu interval never falls below nominal up to Monte
Carlo error: its smallest reading on the grid is \(0.9496\pm0.0003\), at the
least-favorable boundary, consistent with its \emph{proven} boundary value of
exactly \(0.95\) (the dedicated study of Section~\ref{sec:numerical},
\(B=2\times10^{6}\), pins the infimum at \(0.9500\)).  The fiducial \(=\) GFI
\(=\) Bayes family is \emph{conservative everywhere} (coverage in
\([0.975,\,0.997]\) on the grid), in agreement with the conservativeness result
of \citet{Robinson1976}.

The decisive comparison is therefore not coverage but length \emph{at the
variance ratios where validity binds}.  Figure~\ref{fig:cmp-len} shows interval
length relative to Hsu.  The fiducial does \emph{not} dominate Hsu; it
\emph{redistributes} width exactly as Theorem~\ref{thm:cond-shrink} dictates.  It
is shorter than Hsu only at the over-covered end of the \(\rho\)-axis, where Hsu
is already far above nominal and a shorter interval is of no value; at the
least-favorable boundary, where Hsu coverage is sharp at \(0.95\), the fiducial
is markedly \emph{longer}, by \(18\%\) for \((10,10)\), \(26\%\) for
\((5,15)\) and \((5,30)\), and \(44\)--\(45\%\) for \((3,30)\) and \((3,3)\)
(Figure~\ref{fig:cmp-len}, panel~(f)).  Thus the only competitor that retains
validity buys its occasional shortness precisely where shortness is worthless and
pays a large length premium precisely where it would matter.  Welch and the
bootstrap-\(t\) are shorter near the boundary, but only by abandoning coverage;
the bootstrap-\(t\) is additionally unstable in small samples, its mean length
exceeding \(30\times\) the Hsu length at \((3,3)\).

\begin{table}[htbp]
\centering
\caption{Summary of the comparison over the full \(\rho\times(n_1,n_2)\) grid
(nominal \(0.95\)).  ``Min.\ coverage'' is the smallest coverage over the grid.
``Length at LF boundary'' is the interval length relative to Hsu at the
least-favorable variance ratio, where Hsu coverage is sharp at \(0.95\), the
regime in which interval length is decision-relevant.  Only the Hsu interval
carries a constructive finite-sample uniform-validity proof.}
\label{tab:cmp}
{\small
\begin{tabularx}{\textwidth}{@{}l c >{\centering\arraybackslash}p{2.3cm} >{\raggedright\arraybackslash}X@{}}
\hline
Method & Min.\ coverage & Length at LF boundary & Finite-sample uniform validity \\
\hline
IM / Hsu--Scheffe              & \(0.950\)\,\(^{\dagger}\) & \(1.00\) & Yes, proven (Lemma~\ref{lem:hsu}) \\
Welch--Satterthwaite          & \(0.923\) & \(\approx 1.0\) & No, under-covers \\
Fiducial \(=\) GFI \(=\) Bayes & \(0.975\) & \(1.18\)--\(1.45\) & Conservative\(^{\ddagger}\); no constructive proof \\
Bootstrap-\(t\)                & \(0.924\) & unstable\(^{\S}\) & No, under-covers and unstable \\
\hline
\end{tabularx}
}

\smallskip
{\footnotesize \(^{\dagger}\)Grid minimum \(0.9496\pm0.0003\), consistent with
the proven boundary value \(0.95\); high-resolution infimum \(=0.9500\)
(Sec.~\ref{sec:numerical}).  \(^{\ddagger}\)Conservativeness of the
Behrens--Fisher fiducial is the result of \citet{Robinson1976}, specific to this
problem; it is not the prior-free constructive guarantee of the IM cylinder.
\(^{\S}\)The bootstrap-\(t\) mean length exceeds \(30\times\) the Hsu length at
\((3,3)\).}
\end{table}

\begin{figure}[htbp]
\centering
\includegraphics[width=\textwidth]{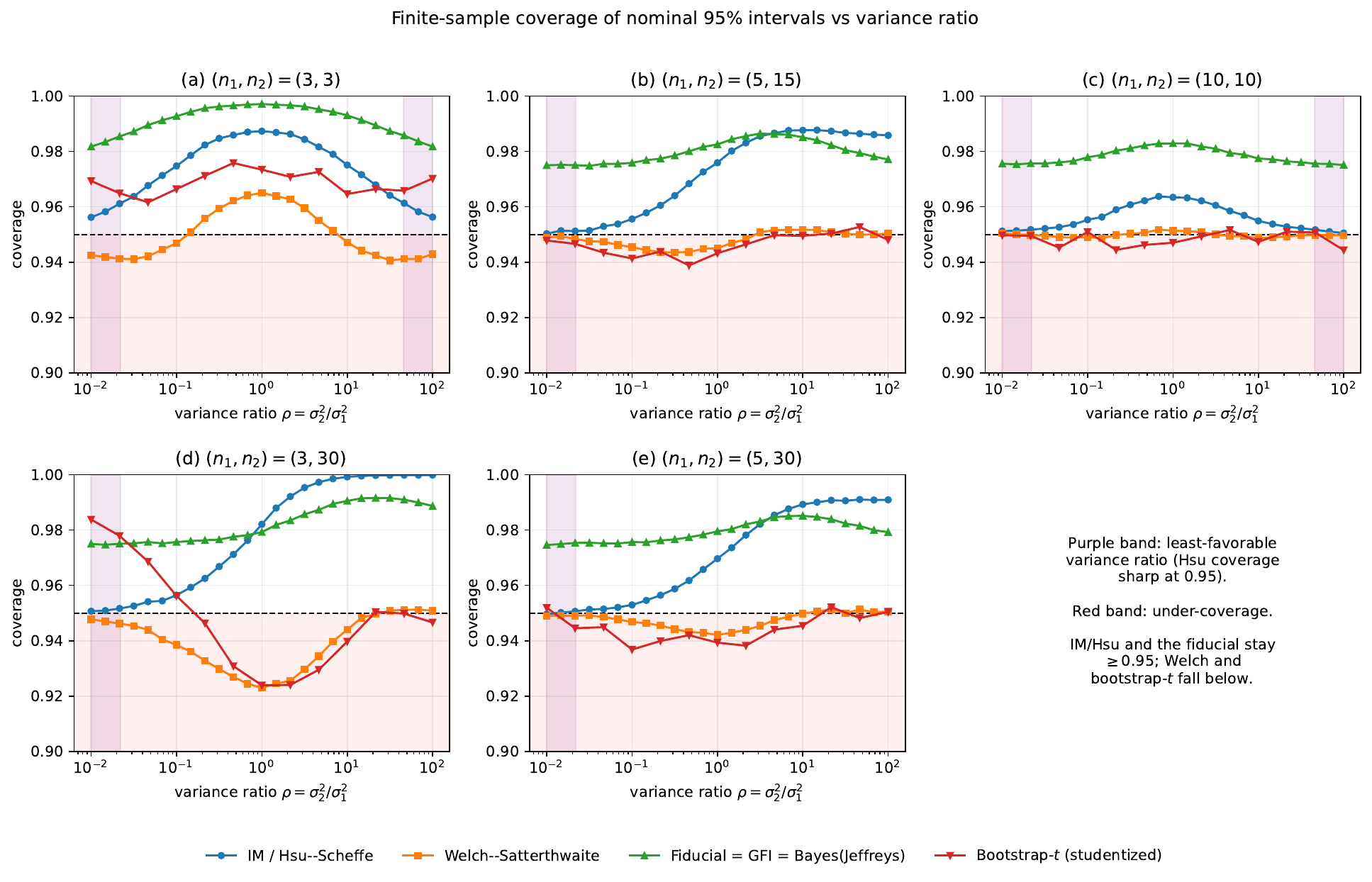}
\caption{Finite-sample coverage of nominal \(95\%\) intervals versus the
variance ratio \(\rho=\sigma_2^2/\sigma_1^2\), one panel per design (axis
zoomed to \([0.90,1.00]\)).  The red band marks under-coverage; the purple band
marks the least-favorable variance ratio, where Hsu coverage is sharp at
\(0.95\).  Welch and the bootstrap-\(t\) fall below nominal, markedly so for
small unbalanced designs; the Hsu interval stays \(\geq0.95\) (exactly \(0.95\)
at the boundary by Lemma~\ref{lem:hsu}); the fiducial \(=\) GFI \(=\) Bayes family
is conservative everywhere.  Hsu/Welch/fiducial: \(N=4\times10^{5}\) datasets per
cell (standard error \(\approx 3.4\times10^{-4}\)); bootstrap-\(t\):
\(N=10^{4}\).}
\label{fig:cmp-cov}
\end{figure}

\begin{figure}[htbp]
\centering
\includegraphics[width=\textwidth]{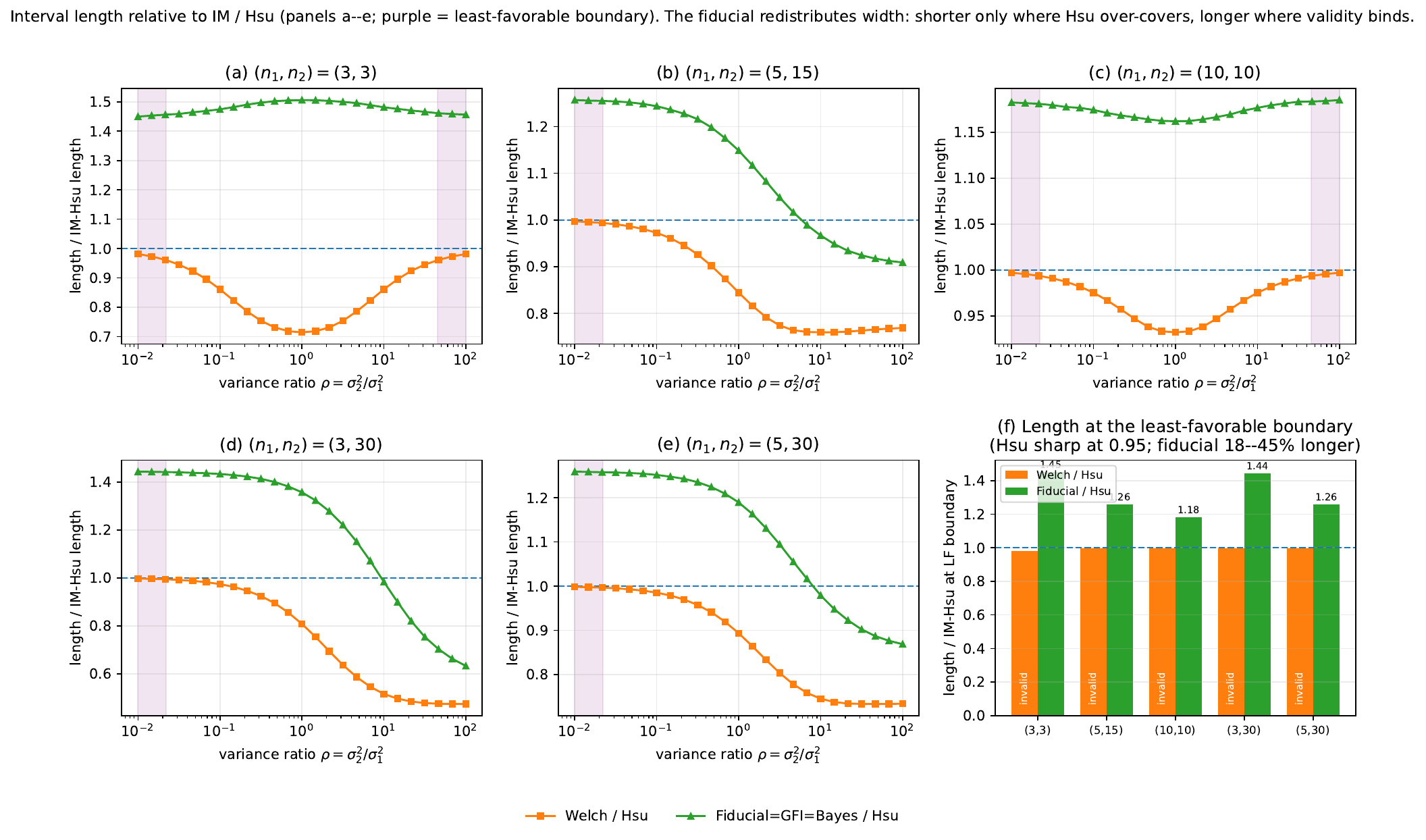}
\caption{Interval length relative to Hsu, and the redistribution of width.
Panels~(a)--(e): length of the Welch and fiducial intervals relative to Hsu
(dashed line) versus \(\rho\), one per design; the purple band marks the
least-favorable boundary.  The fiducial (which preserves validity) lies
\emph{above} the Hsu reference at the boundary and drops below it only at the
over-covered end; it redistributes width rather than uniformly shortening it.
Panel~(f): length relative to Hsu \emph{at the least-favorable boundary} for each
design; the fiducial is \(18\)--\(45\%\) longer than Hsu there, while Welch is no
shorter and is invalid (it under-covers, Fig.~\ref{fig:cmp-cov}).  This is the
empirical form of the no-uniform-shrinkage principle
(Theorem~\ref{thm:cond-shrink}): a genuinely adaptive, validity-preserving
procedure cannot beat the Hsu length where validity binds.}
\label{fig:cmp-len}
\end{figure}

\paragraph{Interpretation.}
The comparison confirms the theory rather than merely illustrating it.  Welch and
the bootstrap-\(t\) achieve shorter intervals by exploiting the observed variance
ratio, and they pay for it exactly where Theorem~\ref{thm:cond-shrink} says they
must: under-coverage that is largest at small, unbalanced designs and near the
least-favorable boundary.  The generalized fiducial is the more instructive
competitor, because it \emph{is} a genuinely two-dimensional, adaptive procedure
and it \emph{does} retain validity (conservatively, by \citet{Robinson1976}).
Precisely for that reason it cannot escape the tradeoff principle: it redistributes
interval width across variance-ratio regimes, shortening the interval only where
Hsu already over-covers and lengthening it by \(18\)--\(45\%\) at the
least-favorable boundary, where Hsu is sharp.  It therefore offers no uniform
improvement over Hsu, and a strict deterioration in the one regime that governs
worst-case validity, a concrete instance of the shortest-projection and
no-uniform-shrinkage theorems.  Crucially, only the Hsu interval's validity rests
on a \emph{constructive, prior-free, finite-sample} argument (Hsu's domination);
Welch and the bootstrap-\(t\) are approximate, and the fiducial relies on a
problem-specific conservativeness theorem.  Any apparent average efficiency of an
adaptive method, moreover, is realized only by averaging length over
variance-ratio regimes, precisely the prior- or weight-dependent comparison the
Discussion cautions against treating as intrinsic.

\section{Conclusion}
\label{sec:conclusion}

The Behrens--Fisher problem remains important because it exposes the limits of
automatic nuisance-parameter elimination, and the IM framework makes this point
especially sharp.  After conditioning and regular marginalization, the exact
reduced association is not one-dimensional but two-dimensional, carrying a
mean-contrast coordinate and a variance-ratio coordinate.  Martin and Liu's
one-dimensional generalized marginal IM is therefore most naturally read as a
cylindrical two-dimensional predictive random set: sharp in its mean-contrast
projection, and vacuous in the second coordinate, predicting the variance ratio
only by its entire range.

Within this geometry the IM/Hsu solution is optimal in a precise validity-first
sense.  Among prior-free procedures that retain exact, uniform, finite-sample
validity, it attains the shortest plausibility interval; we established the
accompanying minimaxity and admissibility in the cylindrical class and, via
symmetric unimodality, the shortest uniformly valid projection among all
two-dimensional sets judged through their symmetric projection.  The companion
tradeoff principle shows the price of this validity: no genuinely adaptive
procedure can uniformly shorten the interval, only redistribute its width across
variance-ratio regimes.  The Monte Carlo study confirms both messages.  The Hsu
interval holds coverage at or above the nominal level uniformly, whereas Welch and
the studentized bootstrap dip below it; and the generalized fiducial (which
coincides with the Jeffreys--Bayes and Fisher fiducial interval for the mean
difference), though conservative, does not dominate Hsu, being shorter only where
Hsu already over-covers and \(18\)--\(45\%\) longer at the least-favorable
boundary, where validity binds.

These comparisons should not be collapsed into a single average over the unknown
variance ratio: such an average is prior- or weight-dependent, not an intrinsic
prior-free efficiency comparison.  Two questions remain open.  The first is
constructive: whether a calibrated noncylindrical two-dimensional predictive
random set can use the variance-ratio coordinate to redistribute interval width
usefully while preserving exact uniform validity.  Our results bound what any such
construction can achieve, and they lead us to a stronger belief, which we record
as a conjecture.

\begin{conjecture}[Prior-free optimality]
\label{conj:prior-free}
The symmetric-interval-projection restriction in the shortest uniformly valid
projection theorem and in Corollary~\ref{cor:proj-admiss} is inessential.  Among
all prior-free procedures that are exactly and uniformly valid for \(\psi\) over
the variance ratio, none can have plausibility interval length no greater than
that of the IM/Hsu interval at every \(\xi\) and strictly smaller at some
\(\xi\).  Equivalently, the IM/Hsu interval is admissible in length, uniformly in
the variance ratio, within the entire prior-free class.
\end{conjecture}

If this holds, the conservative Hsu--Scheffe interval is not merely a safe default
but the unique prior-free endpoint of the efficiency frontier: no genuinely
adaptive construction could improve on it at any variance ratio without
surrendering either exact validity or its prior-free character.

The second question is Bayesian, and here we venture no conjecture, only an
encouraging hint.  The Jeffreys-prior interval examined above is conservative and
coincides with the fiducial.  For the balanced designs, it over-covers throughout
(Figure~\ref{fig:cmp-cov}), while the IM/Hsu interval, also valid, is uniformly
shorter (Figure~\ref{fig:cmp-len}); the Bayesian solution thus seems to be
dominated by IM/Hsu in those cases.  This suggests that the Jeffreys interval may be inadmissible here, and raises the
question of whether it can be improved from within the Bayesian formalism: whether
some other prior, or class of priors, yields valid credible intervals that are
shorter where the Jeffreys interval is wasteful.  We leave this fuller theoretical
investigation to be carried out elsewhere, as an inspiring direction rather than a
claim.

\end{document}